\newcommand\blfootnote[1]{%
  \begingroup
  \renewcommand\thefootnote{}\footnote{#1}%
  \addtocounter{footnote}{-1}%
  \endgroup
}
\newtheorem{theorem}{Theorem}[section]
\newtheorem{lemma}[theorem]{Lemma}
\newtheorem{corollary}[theorem]{Corollary}
\newtheorem{proposition}[theorem]{Proposition}
\newtheorem{conjecture}[theorem]{Conjecture}
\numberwithin{equation}{section}
\theoremstyle {definition}
\newtheorem{definition}[theorem]{Definition}
\newtheorem{remark}[theorem]{Remark}
\DeclareMathOperator{\area}{area}
\DeclareMathOperator{\Hess}{Hess}
\DeclareMathOperator{\Ric}{Ric}
\DeclareMathOperator{\Div}{div}
\DeclareMathOperator{\Sec}{sec}
\DeclareMathOperator{\width}{width}
\DeclareMathOperator{\lip}{Lip}
\DeclareMathOperator{\dist}{dist}
\DeclareMathOperator{\pt}{pt}
\begin{document}
\title{Width estimate and doubly warped product}
\author{Jintian Zhu}
\address{Key Laboratory of Pure and Applied Mathematics, School of Mathematical Sciences, Peking University, Beijing, 100871, P.~R.~China}
\email{zhujt@pku.edu.cn, jintian@uchicago.edu}
\maketitle
\begin{abstract}
In this paper, we give an affirmative answer to Gromov's conjecture (\cite[Conjecture E]{Gromov2018}) by establishing an optimal Lipschitz lower bound for a class of smooth functions on connected orientable open $3$-manifolds with uniformly positive sectional curvatures. For rigidity we show that if the optimal bound is attained the given manifold must be a quotient space of $\mathbf R^2\times(-c,c)$ with some doubly warped product metric. This gives a characterization for doubly warped product metrics with positive constant curvature. As a corollary, we also obtain a focal radius estimate for immersed toruses in $3$-spheres with positive sectional curvatures.
\end{abstract}
\blfootnote{2000 {\it Mathematics Subject Classification}. Primary 53C21; Secondary 53C24}
\section{Introduction}
Let $M^n$ be a connected orientable compact manifold with non-empty boundary $\partial M$, which is the disjoint union of connected components $\partial_-M$ and $\partial_+M$. For any smooth metric $g$ on $M$, the {\it width} of $(M,g)$ is defined as
\begin{equation}
\width(M,g)=\dist_g(\partial_+M,\partial_-M).
\end{equation}

In his paper \cite{Gromov2018}, Gromov introduced the following definition.
\begin{definition}\label{Defn: overtorical band}
Let $
M_0=T^{n-1}\times [-1,1]$
 and
$ \partial_\pm M_0=T^{n-1}\times \{\pm 1\}$. If $M$ admits a continuous map
$
f:(M,\partial_\pm M)\to (M_0,\partial_\pm M_0)
$
with nonzero degree, $M$ is called an overtorical band.
\end{definition}
With this notion, he proved the following width estimate.
\begin{theorem}[\cite{Gromov2018}]\label{Thm: width scalar}
For $2\leq n\leq 8$, let $(M^n,g)$ be a smooth overtorical band with its scalar curvature $R(g)\geq n(n-1)\sigma^2$ for some $\sigma>0$. Then
\begin{equation}\label{Eq: width scalar}
\width(M,g)\leq \frac{2\pi}{n\sigma}.
\end{equation}
\end{theorem}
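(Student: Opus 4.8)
The plan is to argue by contradiction, using Gromov's warped $\mu$-bubbles. Rescaling $g$ by $\sigma^2$ we may assume $\sigma=1$, so that $R(g)\ge n(n-1)$, and we suppose toward a contradiction that $\ell:=\width(M,g)>\tfrac{2\pi}{n}$. The case $n=2$ is elementary: since $\Sec(g)=\tfrac12R(g)\ge1$, a minimizing geodesic realizing $\ell$ lies in $\operatorname{int}M$ except at its endpoints, and the Bonnet--Myers--Sturm comparison forces it to carry an interior conjugate point once its length exceeds $\pi$, contradicting minimality; so from now on $n\ge3$. Let $d_\pm=\dist_g(\cdot,\partial_\pm M)$, fix $c$ with $\tfrac{\pi}{n}<c<\tfrac{\ell}{2}$ (this interval is nonempty precisely because $\ell>\tfrac{2\pi}{n}$), and mollify the $1$-Lipschitz function $\tfrac12(d_--d_+)$ to a smooth $\rho$ on $M$ with $|\nabla\rho|\le1$ (after a harmless rescaling), $\{\rho\le-c\}$ a neighbourhood of $\partial_-M$ and $\{\rho\ge c\}$ a neighbourhood of $\partial_+M$. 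Then $\mathcal N:=\{|\rho|<c\}$ is a compact band in $\operatorname{int}M$, a regular level set $\{\rho=0\}$ separates $\partial_-M$ from $\partial_+M$ inside $\mathcal N$ and is homologous there to $\partial_-M$, and composing the degree-nonzero map $f\colon M\to M_0$ from the definition of an overtorical band with the projection $M_0\to T^{n-1}$ produces a map $\mathcal N\to T^{n-1}$ whose restriction to any closed hypersurface of $\mathcal N$ homologous to $\{\rho=0\}$ has nonzero degree.

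Next I set up the $\mu$-bubble. Prescribe the mean-curvature profile
\[
H_0(t)=-\alpha\tan(\beta t),\qquad \beta=\tfrac{\pi}{2c},\quad \alpha=\tfrac{2(n-1)}{n}\,\beta ,
\]
on $(-c,c)$; since $\beta c=\tfrac\pi2$ we have $H_0(\rho)\to-\infty$ as $\rho\to c$ and $H_0(\rho)\to+\infty$ as $\rho\to-c$, and $\beta<\tfrac n2$ because $c>\tfrac\pi n$. Among Caccioppoli sets $\Omega$ with $\Omega\,\triangle\,\Omega_0\Subset\operatorname{int}\mathcal N$, where $\Omega_0=\{\rho<0\}$, I minimize
\[
\mathcal A(\Omega)=\mathcal H^{n-1}(\partial^*\Omega)-\int_M(\chi_\Omega-\chi_{\Omega_0})\,H_0(\rho)\,d\mathcal H^n .
\]
A minimizer exists by the direct method; the blow-up of $H_0(\rho)$ at $\partial\mathcal N$ serves as a barrier confining the reduced boundary $\Sigma$ to $\operatorname{int}\mathcal N$; and by the regularity theory for $\mathcal A$-minimizers (valid for $n\le8$) $\Sigma$ is a smooth embedded hypersurface, with singular set empty for $n\le7$ and finite for $n=8$, which does not affect what follows --- this is the only point where the hypothesis $n\le8$ is used. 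Moreover $\Sigma$ is nonempty and agrees with $\{\rho=0\}$ outside a compact set, hence is homologous to it; therefore some component $\Sigma'$ of $\Sigma$ admits a degree-nonzero map to $T^{n-1}$, i.e.\ $\Sigma'$ is a closed orientable overtorical $(n-1)$-manifold.

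The first variation of $\mathcal A$ forces the mean curvature of $\Sigma$ to equal $H_0(\rho)$, and the second variation gives the stability inequality
\[
\int_\Sigma\Big(|\nabla_\Sigma\varphi|^2-\big(|A|^2+\Ric_g(\nu,\nu)+\partial_\nu(H_0\circ\rho)\big)\varphi^2\Big)\,d\mathcal H^{n-1}\ \ge\ 0 ,\qquad \varphi\in C^\infty(\Sigma).
\]
Writing $|A|^2+\Ric_g(\nu,\nu)=\tfrac12\big(R(g)-R_{g_\Sigma}\big)+\tfrac12\big(H_\Sigma^2+|A|^2\big)$ via the Gauss equation, and using $H_\Sigma=H_0(\rho)$, $|A|^2\ge\tfrac{H_\Sigma^2}{n-1}$, $R(g)\ge n(n-1)$, $\partial_\nu(H_0\circ\rho)\ge-|H_0'(\rho)|\,|\nabla\rho|\ge-|H_0'(\rho)|$ and the identity $|H_0'(\rho)|=\alpha\beta+\tfrac\beta\alpha H_0(\rho)^2$, one computes (the $H_0(\rho)^2$-terms cancelling thanks to $\alpha=\tfrac{2(n-1)}{n}\beta$)
\[
R(g)+H_\Sigma^2+|A|^2+2\,\partial_\nu(H_0\circ\rho)\ \ge\ n(n-1)+\tfrac{n}{n-1}H_0(\rho)^2-2|H_0'(\rho)|\ =\ n(n-1)-\tfrac{4(n-1)}{n}\beta^2\ =:\ \varepsilon_0 ,
\]
and $\varepsilon_0>0$ precisely because $\beta<\tfrac n2$. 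Feeding this back into the stability inequality gives $\int_\Sigma\big(|\nabla_\Sigma\varphi|^2+\tfrac12R_{g_\Sigma}\varphi^2\big)\ge\tfrac{\varepsilon_0}{2}\int_\Sigma\varphi^2$, so the operator $-\Delta_\Sigma+\tfrac12R_{g_\Sigma}$ is positive, and hence --- since $\tfrac{4(n-2)}{n-3}\ge2$ --- so is the conformal Laplacian $-\tfrac{4(n-2)}{n-3}\Delta_\Sigma+R_{g_\Sigma}$ of $(\Sigma,g_\Sigma)$ (restrict to $\Sigma'$). Thus for $n\ge4$ the closed overtorical manifold $\Sigma'$ has positive Yamabe invariant, i.e.\ carries a metric of positive scalar curvature, contradicting the theorem of Gromov--Lawson and Schoen--Yau for closed overtorical manifolds of dimension $n-1\le7$; for $n=3$, testing with $\varphi\equiv1$ gives $\int_{\Sigma'}R_{g_{\Sigma'}}>0$, hence $\chi(\Sigma')>0$ by Gauss--Bonnet, forcing $\Sigma'=S^2$ and contradicting that $\Sigma'$ admits a degree-nonzero map to $T^2$. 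Either way we have reached a contradiction, which proves \eqref{Eq: width scalar}.

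The delicate point is getting the \emph{sharp} constant. A crude estimate --- discarding $|A|^2$, or using it without the dimensional factor $\tfrac1{n-1}$ --- forces $\beta$ past $\tfrac{\pi}{\sqrt{n(n-1)}}$ and yields only the non-optimal bound $\width(M,g)\le\tfrac{2\pi}{\sqrt{n(n-1)}\,\sigma}$. Obtaining the threshold $\tfrac\pi n$, equivalently $\width(M,g)\le\tfrac{2\pi}{n\sigma}$, requires retaining $|A|^2\ge\tfrac{H_\Sigma^2}{n-1}$ in the Gauss term, prescribing $H_0$ by the tangent profile with exactly the pair $(\alpha,\beta)$ above, and verifying that the ensuing Riccati-type cancellation leaves behind the strictly positive constant $\varepsilon_0$ exactly when $\beta<\tfrac n2$. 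The remaining ingredients --- existence, barrier confinement and $n\le8$ regularity of the warped $\mu$-bubble, together with the classical non-existence of positive scalar curvature on closed overtorical manifolds --- are by now standard.
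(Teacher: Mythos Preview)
The paper does \emph{not} contain a proof of Theorem~\ref{Thm: width scalar}; it is quoted from \cite{Gromov2018} as background for the paper's own results on sectional and Ricci curvature (Theorems~\ref{Thm: main} and the content of Sections~\ref{Sec: proof 1.4}--\ref{Sec: rigidity}). So there is no ``paper's own proof'' to compare against. What you have written is a correct reconstruction of Gromov's $\mu$-bubble argument for the scalar-curvature band estimate, and it is in fact the same circle of ideas the present paper adapts in Section~\ref{Sec: proof 1.4} for the sectional-curvature case: a prescribed-mean-curvature profile blowing up at the ends, existence of a minimizer via barriers (cf.\ Proposition~\ref{Prop: existence of mu bubble}), and a second-variation/Gauss-equation computation forcing a topological obstruction on a boundary component.

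A few small points worth tightening. First, you assert that $\{\rho=0\}$ is a regular level set; this is not automatic after mollification, so invoke Sard's theorem and shift the reference level slightly, as the paper does with $\pm\beta$ in Section~\ref{Sec: proof 1.4}. Second, the phrase ``agrees with $\{\rho=0\}$ outside a compact set'' is awkward here since $M$ is compact; what you mean (and what suffices) is that $\Omega\triangle\Omega_0\Subset\mathcal N$ forces $[\partial\Omega]=[\partial\Omega_0]$ in homology, so some component of $\partial\Omega$ carries the nonzero-degree map to $T^{n-1}$. Third, for $n=8$ the minimizer may have isolated singularities; you assert this ``does not affect what follows,'' but the stability inequality and the conformal-Laplacian step then require the standard cutoff/capacity argument near the singular points, which you should at least flag. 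None of these is a genuine gap; your chain of inequalities with $\alpha=\tfrac{2(n-1)}{n}\beta$ and the resulting cancellation giving $\varepsilon_0=n(n-1)-\tfrac{4(n-1)}{n}\beta^2>0$ exactly when $c>\pi/n$ is the sharp computation, and the endgame via Schoen--Yau/Gromov--Lawson (or Gauss--Bonnet when $n=3$) is the right obstruction.
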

This estimate is optimal and it is believed that any band with the extreme width is isometric to an open torical band with warped product metric. That is, up to scaling the band must be
$$
M= \left(-\frac{\pi}{n},\frac{\pi}{n}\right)\times T^{n-1}\quad\text{\rm and}\quad g=\mathrm dt^2+\cos^{\frac{4}{n}}\left(\frac{n}{2}t\right)g_{flat},
$$
where $g_{flat}$ is an arbitrary flat metric on $T^{n-1}$.

For bands with positive sectional curvatures, Gromov made the following conjecture:

\begin{conjecture}[\cite{Gromov2018}, Conjecture E]\label{Conj: main}
Let $g$ be a smooth metric on $M=T^2\times [-1,1]$ with $\sec(g)\geq 1$. Then
$$
\width(M,g)\leq \frac{\pi}{2}.
$$
\end{conjecture}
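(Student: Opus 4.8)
\emph{Proof proposal.} The plan is to run a warped $\mu$-bubble argument of the type underlying Theorem~\ref{Thm: width scalar}, but arranged so as to use the full strength of $\sec(g)\ge 1$ rather than only its consequence $R(g)\ge 6$. Suppose toward a contradiction that $\width(M,g)=L>\pi/2$, and fix $\ell$ with $\pi/4<\ell<L/2$. Cutting $M$ along a suitably smoothed level set of $\dist_g(\cdot,\partial_-M)$ just below $L$ produces an open sub-band $M'\subset M$ whose ``far'' boundary still separates the two boundary components of $M$, and on $M'$ one can construct a smooth function $\rho$ with $|\nabla\rho|<1$, $\rho\equiv-\ell$ on $\partial_-M'=\partial_-M$, $\rho\to\ell$ at the far boundary, and $-\ell<\rho<\ell$ in the interior; this is the only place where the slack $2\ell<L$ is used. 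Put $h=f\circ\rho$ with $f(t)=-2\tan\!\big(\tfrac{\pi}{2\ell}t\big)$, so that $h\to+\infty$ near $\partial_-M'$ and $h\to-\infty$ near $\partial_+M'$, and minimize
\[
\mathcal A(\Omega)=\mathcal H^{2}(\partial^{*}\Omega)-\int_{M'}(\chi_{\Omega}-\chi_{\Omega_{0}})\,h
\]
over Caccioppoli sets $\Omega$ agreeing with a fixed collar neighborhood $\Omega_0$ of $\partial_-M'$ outside a compact subset of the interior of $M'$. By the standard existence and interior-regularity theory for prescribed-mean-curvature $\mu$-bubbles (\cite{Gromov2018}) --- the blow-up of $h$ confining the minimizer away from $\partial M'$ --- one obtains a smooth closed embedded surface $\Sigma=\partial^{*}\Omega$ with mean curvature $H_{\Sigma}=h|_{\Sigma}$ that is stable for $\mathcal A$.

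Because $\Sigma$ lies in the interior and separates $\partial_-M$ from $\partial_+M$, the projection $M\to T^2$ restricts to a map of degree $\pm 1$ on $\Sigma$, hence to a map of nonzero degree on some component $\Sigma_0$; therefore $\Sigma_0$ has genus $\ge 1$ and $\int_{\Sigma_0}K_{\Sigma_0}\le 0$ by Gauss--Bonnet. Inserting $\psi\equiv1$ into the stability inequality
\[
\int_{\Sigma_0}|\nabla\psi|^{2}\ \ge\ \int_{\Sigma_0}\big(|A|^{2}+\Ric(\nu,\nu)+\partial_{\nu}h\big)\psi^{2}
\]
gives $0\ge\int_{\Sigma_0}\big(|A|^{2}+\Ric(\nu,\nu)+\partial_{\nu}h\big)$. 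Now one uses $\sec(g)\ge1$ twice: on the one hand $\Ric(\nu,\nu)\ge 2$; on the other --- and this is the step improving on the scalar case --- the Gauss equation $K_{\Sigma_0}=\sec_g(T\Sigma_0)+\det A$ together with $\sec_g(T\Sigma_0)\ge1$ and $\int_{\Sigma_0}K_{\Sigma_0}\le0$ forces $\int_{\Sigma_0}\det A\le-|\Sigma_0|$, whence $\int_{\Sigma_0}|A|^{2}=\int_{\Sigma_0}\big(H_{\Sigma}^{2}-2\det A\big)\ge\int_{\Sigma_0}\big(f(\rho)^{2}+2\big)$. Since $f'<0$ and $|\nabla\rho|<1$ give $\partial_{\nu}h=f'(\rho)\langle\nabla\rho,\nu\rangle\ge f'(\rho)$, combining these yields
\[
0\ \ge\ \int_{\Sigma_0}\big(f(\rho)^{2}+f'(\rho)+4\big).
\]

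To close the argument, note that $f(t)^{2}+f'(t)+4=\big(4-\tfrac{\pi}{\ell}\big)\sec^{2}\!\big(\tfrac{\pi}{2\ell}t\big)$, which is strictly positive on $(-\ell,\ell)$ precisely because $\ell>\pi/4$; this contradicts the displayed inequality and forces $\width(M,g)\le\pi/2$. The function $f$ is dictated by the Riccati-type ODE $f'=-f^{2}-4$, whose maximal solution $f(t)=-2\tan(2t)$ is defined exactly on an interval of length $\pi/2$, matching the conjectured bound; this $f$ is precisely the mean curvature of the tori $\{t\}\times T^{2}$ in the doubly warped metric $\mathrm dt^{2}+\cos^{2}t\,\mathrm d\theta_1^{2}+\sin^{2}t\,\mathrm d\theta_2^{2}$ on $(-\tfrac\pi4,\tfrac\pi4)\times T^{2}$ --- a piece of the round $S^{3}$ --- along which every inequality above becomes an equality, which is what one would then exploit for the rigidity and doubly warped product characterization.

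I expect the crux to be recognizing \emph{which} feature of $\sec(g)\ge1$ to exploit. If one only keeps $|A|^{2}\ge\tfrac12 H_{\Sigma}^{2}$ and invokes $R(g)\ge6$, the same scheme merely reproduces the bound $2\pi/3$ of Theorem~\ref{Thm: width scalar} (with $\sigma=1$), which is too weak: the gain from $2\pi/3$ to $\pi/2$ rests entirely on the Gauss-equation estimate $\int_{\Sigma_0}\det A\le-|\Sigma_0|$ on the $\mu$-bubble torus, i.e.\ on using the \emph{tangential} sectional curvature $\sec_g(T\Sigma_0)\ge1$, which is unavailable under a scalar bound and which upgrades the effective constant in the governing ODE from $3$ to $4$. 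The remaining ingredients --- the construction and smoothing of $\rho$ with $|\nabla\rho|<1$ on the sub-band, and the existence and regularity of the $\mu$-bubble for an unbounded prescribing function $h$ --- are technical but follow established patterns.
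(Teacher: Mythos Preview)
Your proposal is correct and follows essentially the same approach as the paper: the paper packages the argument as a Lipschitz lower bound (Theorem~\ref{Thm: main}) and then deduces the width estimate as Corollary~\ref{Cor: width estimate}, but the core is exactly your $\mu$-bubble argument with prescribing function $-2\tan(\cdot)$, the stability inequality tested against $\psi\equiv 1$, and the crucial use of the Gauss equation together with $\sec_g(T\Sigma_0)\ge 1$ to extract the extra constant (the paper writes this pointwise as $|A|^2\ge (h\circ\phi)^2-\hat R+2$, which after integrating against Gauss--Bonnet is your $\int_{\Sigma_0}\det A\le -|\Sigma_0|$). Your identification of the ``crux''---that one must use the tangential sectional curvature via Gauss, not merely the scalar bound---is precisely the point.
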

The main purpose of this paper is to give an affirmative answer to this conjecture by establishing an optimal Lipschitz lower bound for a class of smooth functions on connected orientable open $3$-manifolds as in \cite{Gromov2019}. Apart from the inequality, we also prove a rigidity result for our estimate. It turns out that up to rescaling the equality forces the given manifold to be $\mathring M_0/\Gamma$, where $\mathring M_0=(-\frac{\pi}{4},\frac{\pi}{4})\times\mathbf R^2$ is equipped with the doubly warped product metric
$$
g=\mathrm dt^2+\sin^2\left(t+\frac{\pi}{4}\right)\mathrm ds_1^2+\cos^2\left(t+\frac{\pi}{4}\right)\mathrm ds_2^2
$$
and $\Gamma$ is a lattice of $\mathbf R^2$. Obviously, the $t$-component in above expression induces a distance function on $\mathring M_0/\Gamma$. For convenience, we call it the {\it standard signed distance function} on $\mathring M_0/\Gamma$.

For a connected orientable open manifold $\mathring M^3$, we define
\begin{equation}
\mathcal F_{\mathring M}=\left\{\phi\in C^\infty(\mathring M,(-1,1))\left|\begin{array}{c}\text{ $\phi$ is surjective and proper }\\\text{ and $\phi^*([\pt])$ is non-spherical}
\end{array}\right.\right\},
\end{equation}
where $\phi^*([\pt])$ denotes the pull-back of the homology class $[\pt]$ and non-spherical means it cannot be represented by a $2$-cycle with only spherical components.

Our main theorem is
\begin{theorem}\label{Thm: main}
Assume $(\mathring M^3,g)$ is a connected orientable open Riemannian manifold with uniformly positive sectional curvatures and non-empty $\mathcal F_{\mathring M}$. Then we have
\begin{equation}\label{Eq: main}
\left(\inf \Sec(g)\right)^{\frac{1}{2}}\cdot\lip\phi\geq \frac{4}{\pi}
\end{equation}
for any $\phi\in \mathcal F_{\mathring M}$. Up to rescaling the equality holds if and only if $(\mathring M,g)$ is isometric to $\mathring M_0/\Gamma$ for a lattice $\Gamma$ of $\mathbf R^2$ and $\phi$ is a multiple of the standard signed distance function on $\mathring M_0/\Gamma$.
\end{theorem}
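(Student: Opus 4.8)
The plan is to prove \eqref{Eq: main} by contradiction with a warped $\mu$-bubble, in the spirit of \cite{Gromov2019}, and to read off the doubly warped product from the equality case of that argument. Since $\Sec(g)$ is uniformly positive we may rescale so that $\inf\Sec(g)=1$, and it then suffices to show $\lip\phi\ge\tfrac4\pi$ for every $\phi\in\mathcal F_{\mathring M}$. Assume to the contrary that $L:=\lip\phi<\tfrac4\pi$. As $\phi$ is $L$-Lipschitz, $\dist_g(\phi^{-1}(a),\phi^{-1}(b))\ge(b-a)/L$ for all $-1<a<b<1$, and since $2/L>\tfrac\pi2$ we may fix $a,b$ with $(b-a)/L>\tfrac\pi2$; then $\Omega:=\phi^{-1}([a,b])$ is a compact band (properness), with regular boundary components $\partial_-\Omega=\phi^{-1}(a)$ and $\partial_+\Omega=\phi^{-1}(b)$ after a generic choice of $a,b$, and its width $W:=\dist_g(\partial_-\Omega,\partial_+\Omega)$ satisfies $W>\tfrac\pi2$. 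Let $\rho=\dist_g(\cdot,\partial_-\Omega)$ and fix a smooth $\tilde\rho$ approximating $\rho$ with $|\nabla\tilde\rho|\le 1+\epsilon$ and bounded Hessian. The decisive use of the \emph{strict} inequality $W>\tfrac\pi2$ is this: choose $c\in(\pi/W,2)$, then $\epsilon>0$ so small that $\ell:=(1+\epsilon)\pi/c<W$; choose $\rho_0\in(0,W-\ell)$ and let $h(\rho)=c\cot\!\big(\tfrac{c}{1+\epsilon}(\rho-\rho_0)\big)$ on $(\rho_0,\rho_0+\ell)$, the solution of $h'=-\tfrac1{1+\epsilon}(h^2+c^2)$ that increases to $+\infty$ at $\rho_0$ and decreases to $-\infty$ at $\rho_0+\ell$.

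One then minimises $\mathcal A(\Omega')=\mathcal H^2(\partial^*\Omega'\setminus\partial\Omega)-\int_\Omega(\chi_{\Omega'}-\chi_{\Omega_0})h(\tilde\rho)$ over Caccioppoli sets with $\Omega'\triangle\Omega_0\Subset\{\rho_0<\tilde\rho<\rho_0+\ell\}$ (with $\Omega_0$ a fixed reference set, e.g.\ $\{\tilde\rho<\rho_0+\ell/2\}$). Existence, interior regularity ($\dim=3$), and the barrier estimate trapping the free boundary $\Sigma:=\partial^*\Omega'$ strictly inside $\{\rho_0<\tilde\rho<\rho_0+\ell\}$ are standard, using $h\to\pm\infty$ at the ends and $\Delta\tilde\rho$ bounded there (\cite{Gromov2018,Gromov2019}). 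Thus $\Sigma$ is a closed, stable prescribed-mean-curvature surface homologous to $\phi^{-1}(a)$, hence to $\phi^*([\pt])$; as that class is non-spherical, some component $\Sigma_0$ of $\Sigma$ has genus $\ge1$, so $\chi(\Sigma_0)\le0$, and $\Sigma_0$ is itself a stable $\mu$-bubble. Feeding $\varphi\equiv1$ into the stability inequality $\int_{\Sigma_0}|\nabla\varphi|^2\ge\int_{\Sigma_0}(|A|^2+\Ric(\nu,\nu)+\partial_\nu h)\varphi^2$ gives $0\ge\int_{\Sigma_0}(|A|^2+\Ric(\nu,\nu)+\partial_\nu h)$. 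On $\Sigma_0$ we have $H=h$, so with $\lambda_1,\lambda_2$ the principal curvatures the Gauss equation gives $|A|^2=h^2-2\lambda_1\lambda_2\ge h^2-2K_{\Sigma_0}+2$ (using $\Sec(g)\ge1$), while $\Ric(\nu,\nu)\ge2$, and $\partial_\nu h=h'(\tilde\rho)\langle\nabla\tilde\rho,\nu\rangle\ge(1+\epsilon)h'(\tilde\rho)=-(h^2+c^2)$ since $h'<0$ and $\langle\nabla\tilde\rho,\nu\rangle\le1+\epsilon$. Adding and invoking Gauss–Bonnet,
\[
0\ge\int_{\Sigma_0}\big(4-c^2-2K_{\Sigma_0}\big)=(4-c^2)\area(\Sigma_0)-4\pi\chi(\Sigma_0)\ge(4-c^2)\area(\Sigma_0)>0,
\]
since $c<2$ and $\chi(\Sigma_0)\le0$, a contradiction. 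This proves \eqref{Eq: main}.

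For the rigidity, rescale so $\inf\Sec(g)=1$ and suppose $\lip\phi=\tfrac4\pi$. The inequality applied to sub-bands forces $\dist_g(\phi^{-1}(a),\phi^{-1}(b))\le\tfrac\pi2$ for all $a<b$, so these distances tend to $\tfrac\pi2$ as $b-a\to2$ and one is in the critical regime. By a limiting argument — exhausting $\mathring M$ by sub-bands, running near-critical versions of the $\mu$-bubble construction, and passing to a limit using area and curvature estimates for stable $\mu$-bubbles in dimension $3$ — one produces a closed surface $\Sigma_\infty$ on which every inequality in the argument above is an equality. Hence $\Sigma_\infty$ is a flat torus ($\chi=0$, $K\equiv0$), it coincides with a level set of the relevant distance function ($\langle\nabla\rho,\nu\rangle\equiv1$, so $\nu=\nabla\rho$), all sectional curvatures of $g$ equal $1$ along it, and its principal curvatures are $\cot\theta$ and $-\tan\theta$ for a constant $\theta$. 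The Riccati equation for the shape operators of the parallel surfaces of $\Sigma_\infty$, together with $\Sec(g)\ge1$ and these extremal initial data, is then rigid by ODE comparison: $\Sec(g)\equiv1$ and $g=\mathrm dt^2+\sin^2(t+\tfrac\pi4)\mathrm ds_1^2+\cos^2(t+\tfrac\pi4)\mathrm ds_2^2$ on the maximal band foliated by parallel copies of $\Sigma_\infty$. Surjectivity and properness of $\phi$ pin the $t$-interval to $(-\tfrac\pi4,\tfrac\pi4)$, and non-sphericity of $\phi^*([\pt])$ forces the cross-section to be $\mathbf R^2/\Gamma$ for a lattice $\Gamma$; thus $(\mathring M,g)\cong\mathring M_0/\Gamma$ and $\phi$ is a multiple of the standard signed distance function. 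The converse is a direct computation.

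The main obstacle is the equality case, where every sub-band has width at most $\tfrac\pi2$, so the strict-inequality $\mu$-bubble above degenerates (no admissible $c<2$ exists for a fixed band) and the extremal leaf $\Sigma_\infty$ can only be extracted as a limit of near-critical $\mu$-bubbles along the exhaustion; securing the uniform area and curvature bounds this limit requires, and then propagating the doubly warped product off $\Sigma_\infty$ via the Riccati rigidity, are the technical heart of the rigidity statement. Identifying $\Gamma$ as a lattice of $\mathbf R^2$ — i.e.\ excluding the $S^1$-factors present in $S^3$ itself — is the final delicate point, and it is precisely here that properness and non-sphericity of $\phi^*([\pt])$ are used. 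In the inequality part, by contrast, the only subtlety is the routine one: existence, interior regularity, and barrier trapping of the $\mu$-bubble despite the non-smoothness of the distance function — and it is exactly the slack $W>\tfrac\pi2$ that absorbs the loss $|\nabla\tilde\rho|\le1+\epsilon$ coming from the smoothing.
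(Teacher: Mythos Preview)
Your inequality argument is correct and is a mild reorganization of the paper's: rather than composing the weight $h$ with $\phi$ itself, you pass to a sub-band $\phi^{-1}([a,b])$ of width $>\pi/2$ and run the $\mu$-bubble against a smoothed distance function. The stability computation is the same, and the strict inequality $c<2$ absorbs the $(1+\epsilon)$-loss from smoothing exactly as you say. The paper keeps $h\circ\phi$ throughout, which is marginally cleaner and, more importantly, becomes essential in the rigidity case.

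For the rigidity, your outline names the right difficulties but does not resolve the central one: compactness of the near-critical $\mu$-bubbles. You correctly flag that ``securing the uniform area and curvature bounds this limit requires'' is the technical heart, but you give no mechanism. The paper's device is a specific family of approximating weights $h_\epsilon$ (its Lemma~\ref{Lem: function h epsilon}) engineered so that $\tfrac{4}{\pi}h_\epsilon'+h_\epsilon^2+4$ is \emph{negative} on $\{|\phi|<2/3\}$ and \emph{positive} on $\{|\phi|>2/3\}$. Feeding this into the second-variation inequality forces every non-spherical component $\hat\Sigma_k$ to meet the fixed compact set $\{|\phi|\le 2/3\}$; it is this trapping --- not generic area/curvature estimates --- that makes the limit possible. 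A generic one-parameter family of near-critical weights will not have this sign structure, and without it the $\hat\Sigma_k$ can drift to the ends of $\mathring M$.

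There is a second gap tied to your choice of working with the distance function rather than with $\phi$. Because the paper uses $h\circ\phi$, equality in the second variation yields $\mathrm d\phi(\nu)=4/\pi$ on the limit torus, so that torus lies in a level set of $\phi$ and one eventually reads off $\phi=4\rho/\pi$. Your route gives information about $\tilde\rho$, not about $\phi$, and you do not explain how to conclude that $\phi$ itself is a multiple of the signed distance. Finally, the paper propagates the rigidity off the limit torus not by Riccati comparison but by building a CMC foliation via the implicit function theorem and showing, by an open--closed continuation, that every leaf is again a local $\mathcal A^h$-minimizer (hence again forces all equalities); this simultaneously yields the global product structure and the identification of $\phi$. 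Your Riccati/ODE sketch is plausible, but as written it neither shows the parallel surfaces foliate all of $\mathring M$ nor handles the statement about $\phi$.
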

\begin{remark}
It was pointed out to the author by Chao Li and the referee that the main theorem still holds when $(\mathring M,g)$ has uniformly positive Ricci curvatures. We will include all necessary modifications in the last section. As a result, this also leads to certain improvements for the corollaries below. For simplicity we just omit further discussions since they are almost direct.
\end{remark}

Several corollaries can be derived from our main theorem. The first one is the desired width estimate for overtorical bands with positive sectional curvatures.
\begin{corollary}\label{Cor: width estimate}
Let $(M^3,g)$ be a smooth overtorical band with positive sectional curvatures. Then
\begin{equation}\label{Eq: width sectional}
\left(\inf \Sec(g)\right)^{\frac{1}{2}}\cdot\width(M,g)\leq \frac{\pi}{2}.
\end{equation}
\end{corollary}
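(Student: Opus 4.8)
The plan is to feed Theorem~\ref{Thm: main} a carefully chosen function on the interior $\mathring M$ of $M$ (with the induced metric $g$) and then read off the width bound. Since $M$ is compact, $\inf_{\mathring M}\Sec(g)=\min_M\Sec(g)=:\kappa>0$, so $(\mathring M,g)$ has uniformly positive sectional curvature. Write $w:=\width(M,g)=\dist_g(\partial_-M,\partial_+M)$ and $\rho_\pm:=\dist_g(\,\cdot\,,\partial_\pm M)$, which are $1$-Lipschitz on $M$ and strictly positive on $\mathring M$; set
$$
t:=\frac{\rho_--\rho_+}{\rho_-+\rho_+}\colon\mathring M\to(-1,1).
$$
The triangle inequality gives $\rho_-+\rho_+\geq w$ on $M$, and a one-line computation gives $|\nabla t|\leq \tfrac{2}{\rho_-+\rho_+}\leq\tfrac2w$ almost everywhere, so $\lip t\leq\tfrac2w$. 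Moreover $t\to\mp1$ along every sequence converging to $\partial_\mp M$ (since $\rho_\pm\geq w$ near $\partial_\mp M$), so $t$ is a proper map into $(-1,1)$; and as $t(\mathring M)$ is an interval accumulating at both endpoints of $(-1,1)$ yet never attaining $\pm1$, the map $t$ is surjective.

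For each $\epsilon>0$ I then smooth $t$ by a standard mollification adapted to $g$ to obtain $\phi\in C^\infty(\mathring M,(-1,1))$ with $\|\phi-t\|_{C^0}<\epsilon$ and $\lip\phi\leq\tfrac2w+\epsilon$; after postcomposing with a fixed smooth self-map of $(-1,1)$ that is the identity off a neighbourhood of $\pm1$ and contracts gently toward $\pm1$ there, we may also keep $\phi$ proper with $\phi\to\pm1$ near $\partial_\pm M$ and surjective, worsening the Lipschitz bound only by a further $O(\epsilon)$.

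It remains to check $\phi\in\mathcal F_{\mathring M}$, i.e. that $\phi^*([\pt])$ is non-spherical, which is the heart of the matter. Pick (via Sard) a regular value $c$ of $\phi$ with $|c|<\tfrac12$; then $\Sigma:=\phi^{-1}(c)$ is a nonempty compact embedded surface in $\mathring M$ representing $\phi^*([\pt])\in H_2(\mathring M;\mathbb Z)$, and since $\phi\to\pm1$ near $\partial_\pm M$ the closure of $\{\phi<c\}$ in $M$ is a compact manifold with boundary $\Sigma\sqcup\partial_-M$, so $[\Sigma]=\pm[\partial_-M]$ in $H_2(M;\mathbb Z)$. Let $f\colon(M,\partial_\pm M)\to(M_0,\partial_\pm M_0)$ be the map of degree $d\neq0$ from Definition~\ref{Defn: overtorical band}. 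Its restriction $\partial_-M\to\partial_-M_0=T^2\times\{-1\}$ has degree $d$, so $f_*[\partial_-M]=d\,[T^2]$ and hence $f_*[\Sigma]\neq0$ in $H_2(M_0;\mathbb Z)\cong H_2(T^2;\mathbb Z)\cong\mathbb Z$. On the other hand $\pi_2(M_0)=\pi_2(T^2)=0$, so every map of a sphere into $M_0$ is null-homotopic, and any $2$-cycle in $\mathring M$ built from spheres is carried by $f$ to a null-homologous cycle; since $f_*[\Sigma]\neq0$, the class $\phi^*([\pt])=[\Sigma]$ cannot be so represented, i.e. it is non-spherical. Thus $\phi\in\mathcal F_{\mathring M}$, and Theorem~\ref{Thm: main} yields $(\inf\Sec(g))^{1/2}\lip\phi\geq\tfrac4\pi$; combining with $\lip\phi\leq\tfrac2w+O(\epsilon)$ and letting $\epsilon\to0$ gives $(\inf\Sec(g))^{1/2}\cdot\tfrac2w\geq\tfrac4\pi$, that is $(\inf\Sec(g))^{1/2}\,\width(M,g)\leq\tfrac\pi2$. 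I expect the main obstacle to be exactly this homological step — making rigorous that a level set of an admissible $\phi$ is forced by the nonzero degree to be non-spherical — together with the more routine bookkeeping needed to ensure the mollification of $t$ preserves properness and surjectivity while keeping $\lip\phi$ under control.
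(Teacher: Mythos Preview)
Your proof is correct and follows essentially the same strategy as the paper: manufacture from the distance functions to $\partial_\pm M$ a smooth element of $\mathcal F_{\mathring M}$ whose Lipschitz constant is controlled by $\width(M,g)^{-1}$, then invoke Theorem~\ref{Thm: main}. The paper packages the construction in a separate Lemma~\ref{Lem: contracting map}: assuming by contradiction that $\width(M,g)>2l$ with $l=\pi/4$ (after normalizing $\inf\Sec(g)=1$), it truncates $\rho_-$ to be constant near both boundary components, mollifies, rescales, and finally multiplies by a factor $(1-\epsilon'\eta)$ with $\eta>0$ on $\mathring M$ and $\eta=0$ on $\partial M$ to force $\phi^{-1}(\pm l)=\partial_\pm M$ exactly; the non-spherical condition is then simply asserted. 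Your variant uses $t=(\rho_--\rho_+)/(\rho_-+\rho_+)$, argues directly rather than by contradiction, and spells out the homological step explicitly (that argument is correct and is precisely what the paper has in mind). The paper's truncation-before-smoothing makes the mollification bookkeeping cleaner, since the function is already smooth (indeed constant) near $\partial M$; your $t$ is not smooth in the collars, so you need either a variable-radius mollifier or to graft a smooth tail near $\partial_\pm M$ as you indicate. Both routes give the same bound with the same effort.
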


The second one is a focal radius estimate for immersed toruses in $3$-spheres with positive sectional curvatures. Recall that the focal radius of any immersed surface $i:\Sigma\to M$ is defined as
\begin{equation*}
r_f(\Sigma,M)=\sup\{r>0:\text{map $\exp^\bot:\nu_{\Sigma,r}\to M$ has no critical point}\},
\end{equation*}
where
$\nu_\Sigma$ is the total space of the pull-back normal bundle on $\Sigma$ and
$$
\nu_{\Sigma,r}=\{(x,v)\in\nu_\Sigma:|v|<r\}.
$$
We have
\begin{corollary}\label{Cor: focal estimate}
Let $(M^3,g)$ be a $3$-sphere with $\sec(g)\geq 1$ and $i:\Sigma\to M$ an immersed torus. Then the focal radius of $\Sigma$ satisfies
\begin{equation}\label{Eq: focal estimate}
r_f(\Sigma,M)\leq \frac{\pi}{4}.
\end{equation}
The equality holds if and only if $g$ has constant curvature $1$ and $i(\Sigma)$ is congruent to the Clifford torus.
\end{corollary}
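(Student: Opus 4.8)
The plan is to apply Theorem~\ref{Thm: main} to the normal tube of $\Sigma$ at its focal radius. Put $r=r_f(\Sigma,M)$. Since $\Sigma$ is an orientable surface immersed in the orientable $3$-manifold $M$, there is a global unit normal field $\nu$ along $i$, and this identifies $\nu_{\Sigma,r}$ with $\mathring M:=T^2\times(-r,r)$ via $(x,t)\mapsto\exp_{i(x)}(t\nu_x)=\exp^\bot(x,t)$. By the definition of $r_f$, the map $\exp^\bot$ has no critical point on $\nu_{\Sigma,r}$, so---being an immersion between manifolds of equal dimension---it is a local diffeomorphism, and if we equip $\mathring M$ with the pulled-back metric $\tilde g:=(\exp^\bot)^*g$ it becomes a local isometry onto its image in $(M,g)$. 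Hence $(\mathring M,\tilde g)$ is a connected orientable open Riemannian $3$-manifold with $\inf\Sec(\tilde g)\geq\inf\Sec(g)\geq1$. I also record that the curves $t\mapsto\exp^\bot(x,t)$ are unit speed geodesics which, by the Jacobi field computation behind Fermi coordinates, stay orthogonal to the slices $T^2\times\{t\}$; thus $\tilde g=\mathrm dt^2+h_t$ for a smooth family $h_t$ of metrics on $T^2$, and in particular $|\nabla^{\tilde g}t|\equiv1$.

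Next I would exhibit an admissible test function. Take $\phi(x,t)=t/r$. This is a smooth surjective proper map $\mathring M\to(-1,1)$, and $\phi^*([\pt])=[T^2\times\{\pt\}]$ generates $H_2(\mathring M)\cong\mathbf Z$; because $\pi_2(T^2)=0$, this class cannot be represented by a $2$-cycle with only spherical components, so $\phi\in\mathcal F_{\mathring M}$ and in particular $\mathcal F_{\mathring M}\neq\emptyset$. From the Fermi form of $\tilde g$ we get $\lip\phi=\sup|\nabla^{\tilde g}\phi|=1/r$. From $\lip\phi=1/r$ and $\inf\Sec(\tilde g)\geq1$, Theorem~\ref{Thm: main} (applied to $(\mathring M,\tilde g)$ and $\phi$, after the harmless rescaling that normalizes the curvature to $1$) then gives $r_f(\Sigma,M)=r\leq\pi/4$.

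For the rigidity statement, suppose $r_f(\Sigma,M)=\pi/4$. Then all the estimates above must be equalities: $\inf\Sec(\tilde g)=1$ and equality holds in Theorem~\ref{Thm: main}, so there is an isometry $F\colon(\mathring M,\tilde g)\to\mathring M_0/\Gamma$ carrying $\phi$ to a multiple of the standard signed distance function. In particular $F$ respects the level sets of the $t$-component, so $F(T^2\times\{0\})$ is the middle slice $\{t=0\}$ of $\mathring M_0/\Gamma$; a direct computation in the doubly warped metric shows this slice is a flat torus with principal curvatures $\cot\tfrac{\pi}{4}=1$ and $-\tan\tfrac{\pi}{4}=-1$. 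Since $\exp^\bot$ is a local isometry it preserves induced metrics and second fundamental forms, so $i(\Sigma)=\exp^\bot(T^2\times\{0\})$ is a flat minimal immersed torus in $(M,g)$ with $|A|^2\equiv2$. To globalize, I would note that $U:=\exp^\bot(\mathring M)$ is open and, being locally isometric to the doubly warped metric, has constant curvature $1$; moreover every $q\in M$ lies in $U\cup\exp^\bot(T^2\times\{\pm\tfrac{\pi}{4}\})$, because a minimizing geodesic from $q$ to the compact set $i(\Sigma)$ meets $\Sigma$ orthogonally and, having no interior focal point, has length at most $\tfrac{\pi}{4}$. As the two extreme slices $\exp^\bot(T^2\times\{\pm\tfrac{\pi}{4}\})$ are at most one dimensional---the corresponding cycles in $\mathring M_0/\Gamma$ shrink to zero $\tilde g$-length as $t\to\pm\tfrac{\pi}{4}$---the set $U$ is dense, hence $\Sec(g)\equiv1$ on $M$ by continuity, and $(M,g)$, a simply connected closed $3$-manifold of constant curvature $1$, is the round sphere $S^3(1)$. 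Finally, a closed immersed minimal surface of $S^3(1)$ with $|A|^2\equiv2$ is congruent to the Clifford torus---this is the Chern--do Carmo--Kobayashi rigidity together with Lawson's identification of the equality case, or alternatively a direct application of the fundamental theorem of hypersurfaces to the explicit first and second fundamental forms found above. The converse direction is the classical computation that the normal exponential map of the Clifford torus in $S^3(1)$ is an immersion exactly on $\{|v|<\pi/4\}$, so that $r_f=\pi/4$ there.

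I expect the delicate point to be this globalization: upgrading ``$g$ is round on the tube $U$'' to ``$g$ is round on all of $M$''. The crucial ingredient is the surjectivity $M=\exp^\bot(T^2\times[-\tfrac{\pi}{4},\tfrac{\pi}{4}])$ coming from minimizing normal geodesics that cannot contain interior focal points, and this has to be combined with the degeneration of the two boundary slices to lower dimensional sets---a feature read off precisely from the structure of the equality case in Theorem~\ref{Thm: main}---so as not to obstruct the density of $U$.
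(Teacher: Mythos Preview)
Your argument is correct and follows essentially the same approach as the paper: pull back the metric to the normal tube, apply Theorem~\ref{Thm: main} to the rescaled signed distance function to get the inequality, and in the equality case use the rigidity to identify the tube with $\mathring M_0/\Gamma$, globalize to conclude $g$ has constant curvature $1$, and then invoke Lawson's local rigidity for flat minimal surfaces in $S^3$. Your globalization via minimizing normal geodesics (no interior focal point forces length $\le\pi/4$, and the degenerate end–slices are at most one–dimensional so the tube is dense) spells out what the paper compresses into the single line ``the focal radius is attained along any unit normal vector of $\Sigma$, which implies that the closure of the image $\exp^\bot(\nu_{\Sigma,r_f})$ must be the entire $M$''; one small remark is that your phrase ``after the harmless rescaling that normalizes the curvature to $1$'' is slightly misleading (rescaling would change $r$), but what you actually need---and what the proof of \eqref{Eq: main} delivers---is simply that $\Sec\ge 1$ forces $\lip\phi\ge 4/\pi$.
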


Generalizations of these results in higher dimensions are interesting and we make the following conjectures:
\begin{conjecture}\label{Conj: focal radius high dimension}
Let $(M^n,g)$ be a $n$-sphere with $\sec(g)\geq 1$, where $n=p+q+1$ for $p,q\in\mathbf N_+$. Suppose that $i:\Sigma\approx\mathbf S^p\times \mathbf S^q\to M$ is an immersed hypersurface, then the focal radius of $\Sigma$ satisfies
$$
r_f(\Sigma,M)\leq \frac{\pi}{4},
$$
where the equality holds if and only if $g$ has constant sectional curvature $1$ and $i(\Sigma)$ is congruent to the Clifford hypersurface $\mathbf S^p(\frac{1}{\sqrt 2})\times \mathbf S^q(\frac{1}{\sqrt 2})$.
\end{conjecture}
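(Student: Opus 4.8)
The plan to prove Conjecture~\ref{Conj: focal radius high dimension} is to mirror the derivation of Corollary~\ref{Cor: focal estimate} from Theorem~\ref{Thm: main}: first establish a higher-dimensional counterpart of Theorem~\ref{Thm: main}, and then deduce the focal radius bound from it essentially formally. The statement one needs is the following analogue of Theorem~\ref{Thm: main}: if $(\mathring M^n,g)$ is a connected orientable open Riemannian manifold with uniformly positive sectional curvatures admitting a surjective proper $\phi\in C^\infty(\mathring M,(-1,1))$ whose fibres carry the class $[\mathbf S^p\times\mathbf S^q]$ in the appropriate (``non-spherical'') sense --- formulating this condition correctly for general $p,q$ is itself part of the problem, although in the application $\phi^*([\pt])$ is literally the fundamental class of an $\mathbf S^p\times\mathbf S^q$-fibre --- then $\lip\phi$ satisfies the same optimal lower bound in terms of $(\inf\Sec(g))^{1/2}$ as in Theorem~\ref{Thm: main} (so that $\Sec(g)\geq1$ forces $\lip\phi\geq\frac{4}{\pi}$), with equality exactly when $(\mathring M,g)$ is the doubly warped band $\mathbf S^p\times\mathbf S^q\times(-\frac{\pi}{4},\frac{\pi}{4})$ carrying the metric $\mathrm dt^2+\sin^2(t+\frac{\pi}{4})g_{\mathbf S^p}+\cos^2(t+\frac{\pi}{4})g_{\mathbf S^q}$ --- which, written in Clifford coordinates, is precisely the round metric on $\mathbf S^n$ with its two Clifford focal submanifolds $\mathbf S^p$ and $\mathbf S^q$ deleted --- and $\phi$ is a multiple of the $t$-coordinate.

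Granting this, the deduction proceeds as in the proof of Corollary~\ref{Cor: focal estimate}. Let $i:\Sigma\approx\mathbf S^p\times\mathbf S^q\to M$ and put $r=r_f(\Sigma,M)$. The set of radii $\rho$ for which $\exp^\bot$ has no critical point on $\nu_{\Sigma,\rho}$ is downward closed and stable under increasing unions, so $\exp^\bot$ has no critical point on $\nu_{\Sigma,r'}$ for every $r'\leq r$; being then a local diffeomorphism, it pulls $g$ back to a metric $\bar g$ on $\nu_{\Sigma,r'}\cong\mathbf S^p\times\mathbf S^q\times(-r',r')$ with $\Sec(\bar g)\geq1$, and by the Gauss lemma for the normal exponential map the normal coordinate $t$ satisfies $|\nabla^{\bar g}t|\equiv1$ (equivalently $\bar g=\mathrm dt^2+h_t$ with no cross terms). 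Hence $\phi:=t/r'$ is a surjective proper function with $\lip\phi=1/r'$ and $\phi^*([\pt])=[\mathbf S^p\times\mathbf S^q]$, and feeding this band into the estimate above together with $\Sec(\bar g)\geq1$ gives $1/r'=\lip\phi\geq\frac{4}{\pi}$, that is $r'\leq\frac{\pi}{4}$; letting $r'\to r$ yields $r_f(\Sigma,M)\leq\frac{\pi}{4}$. If $r=\frac{\pi}{4}$, one may take $r'=r$ directly, the estimate is then an equality, and its rigidity clause identifies $(\nu_{\Sigma,\pi/4},\bar g)$ with the doubly warped band above; in particular $\Sec(\bar g)\equiv1$, so $\exp^\bot$ is a local isometry onto a dense open subset of $M$, whence $(M,g)$ --- a closed simply connected $n$-manifold of constant curvature $1$ --- is the round $\mathbf S^n$. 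Reading the induced metric $\frac12 g_{\mathbf S^p}\oplus\frac12 g_{\mathbf S^q}$ and the constant shape operator of the zero section off the warped-product form and invoking the fundamental theorem of hypersurfaces, one concludes that $i(\Sigma)$ is congruent to the Clifford hypersurface $\mathbf S^p(\frac{1}{\sqrt 2})\times\mathbf S^q(\frac{1}{\sqrt 2})$.

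The entire weight of the argument rests on the first step, and I expect it to be the genuine obstacle. The proof of Theorem~\ref{Thm: main} produces a warped $\mu$-bubble that is a \emph{surface}; non-sphericity of $\phi^*([\pt])$ forces it to be a torus, and a first-eigenvalue comparison together with Gauss--Bonnet and $\Sec(g)\geq1$ both yields the inequality and drives the doubly warped rigidity. For $n=p+q+1>3$ the $\mu$-bubble is a $(p+q)$-dimensional hypersurface, and the structural difficulty is that a lower bound on $\Sec(g)$ does \emph{not} pass, through the Gauss equation, to a usable lower bound on the sectional --- or even Ricci --- curvature of a minimal or $\mu$-bubble hypersurface: unlike the scalar curvature situation exploited by Gromov, sectional lower bounds are not stable under minimal slicing. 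One would therefore need either an iterable descent peeling off the $\mathbf S^p$ and $\mathbf S^q$ factors one dimension at a time while retaining both a meaningful curvature condition and control of the topology of each successive slice, or a genuinely new rigidity mechanism replacing the surface arguments used here (together with, for $n\geq8$, the customary treatment of possible singularities of $\mu$-bubbles). Extending the doubly warped product rigidity of Theorem~\ref{Thm: main} to these codimensions should be regarded as the crux of the conjecture.
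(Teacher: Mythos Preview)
The paper does \emph{not} prove this statement: it is stated as an open conjecture, and the only case the paper settles is the special one where $p=1$ or $q=1$ (see the remark immediately after the conjecture). So there is no ``paper's own proof'' of the general assertion for you to be compared against, and your proposal is not a proof either --- as you yourself say, it is a strategy whose decisive first step (a higher-dimensional analogue of Theorem~\ref{Thm: main}) is left unfilled.

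For the special case $p=1$ or $q=1$ the paper's argument is entirely different from your $\mu$-bubble plan. It is the Riccati/Jacobi comparison presented at the end of Section~\ref{Sec: proof corollaries} (attributed to Neves): on the pulled-back band one studies the least eigenvalue $\lambda(s)$ of $\Hess\rho$ along normal geodesics, which satisfies $\lambda'+\lambda^2\leq -1$ under $\Sec\geq 1$; if $r_f>\pi/4$ this forces all principal curvatures of $\Sigma$ into $(-1,1)$, hence the induced metric has positive sectional curvature, contradicting the existence of an $\mathbf S^1$-factor via Gauss--Bonnet (or, more generally, the topology of $\mathbf S^1\times\mathbf S^q$). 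The rigidity in the equality case is then read off from equality in the Riccati comparison. This argument genuinely uses the $\mathbf S^1$-factor to produce a topological obstruction to positive curvature on $\Sigma$; for $p,q\geq 2$ no such obstruction is available, which is exactly why the general conjecture remains open.

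Your assessment of the obstacles to the $\mu$-bubble route is accurate: sectional lower bounds are not inherited by minimal or prescribed-mean-curvature hypersurfaces, so the two-dimensional mechanism behind Theorem~\ref{Thm: main} does not iterate. In short, your proposal is a reasonable blueprint but not a proof, and the paper offers no proof of the general conjecture to compare it with --- only a different, elementary argument for the $p=1$ or $q=1$ case.
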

\begin{remark}
If either $p$ or $q$ is equal to $1$, this conjecture is true from the discussion in the end of Section \ref{Sec: proof corollaries}, where we present a proof from professor Andr\'e Neves using Jacobi equation for geodesics and Cheeger-Gromoll splitting theorem.
\end{remark}

With the name {\it Clifford band} for manifolds $M_{p,q}=\mathbf S^p\times \mathbf S^q\times [-1,1]$, we also speculate
\begin{conjecture}
If $(M_{p,q},g)$ be a smooth Clifford band with its sectional curvature $\Sec(g)\geq 1$, then
\begin{equation*}
\width(M,g)\leq \frac{\pi}{2}.
\end{equation*}
\end{conjecture}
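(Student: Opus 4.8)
The plan is to run the $\mu$-bubble dimension--descent scheme behind Theorem~\ref{Thm: main} and Corollary~\ref{Cor: width estimate}, taking as the model the doubly warped product
$$
g_{\mathrm{mod}}=\mathrm dr^{2}+\cos^{2}r\,g_{\mathbf S^{p}}+\sin^{2}r\,g_{\mathbf S^{q}},\qquad r\in\Bigl(0,\tfrac{\pi}{2}\Bigr),
$$
on $(0,\tfrac{\pi}{2})\times\mathbf S^{p}\times\mathbf S^{q}$, which is an open dense subset of the unit round sphere $\mathbf S^{p+q+1}$; its Clifford hypersurface sits at $r=\tfrac{\pi}{4}$, and the sub-bands $r\in(\tfrac{\pi}{4}-w,\tfrac{\pi}{4}+w)$ realize widths $2w<\tfrac{\pi}{2}$, so $\tfrac{\pi}{2}$ is the expected---non-attained---supremum. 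When $(p,q)=(1,1)$ the statement is exactly Corollary~\ref{Cor: width estimate}, since $\mathbf S^{1}\times\mathbf S^{1}\times[-1,1]=T^{2}\times[-1,1]$ is an overtorical band; but as soon as $p$ or $q$ is $\ge 2$ the space $\mathbf S^{p}\times\mathbf S^{q}$ carries positive scalar curvature and admits no nonzero-degree map to a torus, so neither Theorem~\ref{Thm: width scalar} nor the overtorical estimate applies and a genuinely new argument is needed.

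After normalizing $\inf\sec(g)=1$ I would argue by contradiction, supposing $\width(M_{p,q},g)>\tfrac{\pi}{2}$. Working directly on the band, the signed distance $\rho$ to a central slice is a smooth proper function with $|\nabla\rho|\le 1$, and, since $\width>\tfrac{\pi}{2}$, the warping profile $h(r)=q\cot(r+\tfrac{\pi}{4})-p\tan(r+\tfrac{\pi}{4})$ pulls back (after a suitable reparametrization of $\rho$) to an admissible prescribed-mean-curvature weight on a central open sub-band, blowing up at its two ends. Minimizing $\Omega\mapsto\area(\partial^{*}\Omega)-\int_{\Omega}h\,\mathrm d\vol$ over Caccioppoli sets separating $\partial_{-}M$ from $\partial_{+}M$ produces (when $p+q\le 6$, and otherwise via the generic-regularity machinery) a smooth closed hypersurface $\Sigma_{1}$, homologous to the $\mathbf S^{p}\times\mathbf S^{q}$-slices and mapping onto $\mathbf S^{p}\times\mathbf S^{q}$ with nonzero degree, with $H_{\Sigma_{1}}=h$ and the usual stability inequality. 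Repeating inside $\Sigma_{1}$---e.g.\ separating two parallel copies of $\mathbf S^{p-1}\times\mathbf S^{q}$---and iterating, the slice manifold descends (with nonzero degree) along $\mathbf S^{p}\times\mathbf S^{q}\to\mathbf S^{p-1}\times\mathbf S^{q}\to\cdots\to\mathbf S^{1}\times\mathbf S^{1}$, so after $p+q-1$ cuts one reaches a closed surface $\Sigma_{p+q-1}$ that maps onto $T^{2}$ with nonzero degree, hence carries a non-spherical homology class. On that last surface Gromov's ODE comparison together with the pinching $\sec\ge 1$---which is exactly the ingredient that sharpens the threshold to $\width\le\tfrac{\pi}{2}$, equivalently $\lip\phi\ge\tfrac{4}{\pi}$ for the associated signed distance function---should force $\chi(\Sigma_{p+q-1})>0$ via Gauss--Bonnet, so $\Sigma_{p+q-1}$ is a union of $2$-spheres, contradicting its homology class.

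The main obstacle is the descent step. In the scalar-curvature theory, stability transfers a scalar curvature lower bound (up to a warping correction) to the minimizing hypersurface; but a stable minimal hypersurface in a manifold with $\sec\ge 1$ need carry \emph{no} positive sectional curvature---the Clifford hypersurface $\mathbf S^{p}(\tfrac{1}{\sqrt 2})\times\mathbf S^{q}(\tfrac{1}{\sqrt 2})\subset\mathbf S^{p+q+1}$ already has flat mixed $2$-planes---so ``$\sec\ge\kappa$'' cannot be the quantity propagated through the iteration. One must instead carry a weaker invariant that still closes the final Gauss--Bonnet estimate, and the natural guess is a hybrid: exploit $\sec\ge 1$ directly on the spherical $2$-planes of the slices while running an enlargeability/$\widehat A$-type (overtorical) argument on the mixed, nearly flat directions, along which $\mathbf S^{p}\times\mathbf S^{q}$ genuinely does look overtorical. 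Packaging these two mechanisms so that they survive simultaneously through $p+q-1$ cuts is where the real difficulty lies. A secondary point is regularity of the $\mu$-bubbles when $p+q\ge 7$, requiring a dimension restriction or generic regularity; this issue is absent for the first genuinely new case $(p,q)=(2,1)$, $n=4$, where the descent reduces to $\mathbf S^{2}\times\mathbf S^{1}\to T^{2}$, and which is therefore the natural first target.
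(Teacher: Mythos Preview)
The statement you are asked to prove is labeled in the paper as a \emph{conjecture}, not a theorem; the paper offers no proof and presents it explicitly as an open problem for higher dimensions. The only case the paper establishes is $(p,q)=(1,1)$, which is Corollary~\ref{Cor: width estimate}, and the remark following Conjecture~\ref{Conj: focal radius high dimension} about $p$ or $q$ equal to $1$ concerns the focal-radius conjecture, not the Clifford-band width estimate. So there is no ``paper's own proof'' to compare against.

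Your proposal is an honest and well-informed sketch of an attack, and you yourself correctly diagnose the genuine gap: the $\mu$-bubble descent scheme propagates a scalar-curvature-type inequality (via the stability operator and the Gauss equation), but a sectional curvature lower bound does \emph{not} descend to a stable minimal or prescribed-mean-curvature hypersurface---your Clifford example already shows this. Consequently, after the first cut you no longer have $\sec\ge 1$ on $\Sigma_1$, and the iteration cannot proceed as written. The ``hybrid'' idea you float---tracking $\sec\ge 1$ on spherical $2$-planes while running an overtorical mechanism on mixed directions---is suggestive but not an argument: stability gives you a single scalar inequality on $\Sigma_1$, not separate control over different families of $2$-planes, and there is no known way to package these two mechanisms so that they survive even one cut, let alone $p+q-1$. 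This is precisely why the statement remains a conjecture; your outline does not close it, and as written it is a plausible strategy rather than a proof.
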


Now let us say some words on our proof for Theorem \ref{Thm: main}. Basically the proof follows the line of $\mu$-bubble method in \cite{Gromov2019} and the inequality \eqref{Eq: main} comes from a classical analysis on the second variation formula. However, the rigidity is very subtle due to the lack of compactness on open manifolds for minimizing $\mu$-bubbles. In this case, we follow a similar idea from \cite{CEM2019} to choose our functional carefully such that the approximating minimizing $\mu$-bubbles always intersect with a fixed compact subset, which is the key to make a limiting procedure possible. Once this has been done, the rigidity result comes from a standard foliation argument from \cite{BBN2010} (see also \cite{Z2019}). We emphasize that our idea can be also applied to establish a rigidity result for Theorem \ref{Thm: width scalar}.

The rest part of this paper will be organized as follows. In section 2, we show the inequality \eqref{Eq: main}. In section 3, we handle the rigidity case for our main theorem. In section 4, we give a proof for Corollary \ref{Cor: width estimate} and Corollary \ref{Cor: focal estimate}. In section 5, we present necessary modifications for our main theorem under positive Ricci curvature condition.

\section*{Acknowledgement}
This work is partially supported by China Scholarship Council and the NSCF grants No. 11671015 and 11731001. The author would like to thank Professor Andr\'e Neves for many helpful conversations. He is also grateful to Professor Yuguang Shi for constant encouragements. He thanks Chao Li and the referee for valuable comments and suggestions as well.

\section{Proof for \eqref{Eq: main}}\label{Sec: proof 1.4}
In this section, $(\mathring M,g)$ denotes a connected orientable open $3$-manifold with nonempty $\mathcal F_{\mathring M}$. Let $\phi$ be a fixed element in $\mathcal F_{\mathring M}$. For any smooth function $h:(-T,T)\to \mathbf R$ with $0<T<1$, we define the following functional
\begin{equation}
\mathcal A^h(\Omega)=\mathcal H^2(\partial^*\Omega)-\int_{\mathring M}(\chi_\Omega-\chi_{\Omega_0})h\circ\phi\,\mathrm d\mathcal H^3,\quad \Omega_0=\{\phi<0\},
\end{equation}
where $\Omega$ is any Caccippoli set of $\mathring M$ with reduced boundary $\partial^*\Omega$ such that
$$\Omega\Delta\Omega_0\Subset \mathcal D(h\circ\phi)= \{-T<\phi<T\}$$
and $\chi_\Omega$ is the characteristic function of region $\Omega$. To avoid possible confusion on mean curvature we make a convention that the unit sphere $\mathbf S^2$ in $\mathbf R^3$ has mean curvature $2$ with respect to the outer unit normal.

First we prove the following proposition which is fundamental to our proof.

\begin{proposition}\label{Prop: existence of mu bubble}
Assume that $\pm T$ are regular values of $\phi$ and that the function $h$ satisfies
\begin{equation}\label{Eq: h blow up}
\lim_{t\to -T}h(t)=+\infty\quad \text{\rm and}\quad\lim_{t\to T}h(t)=-\infty,
\end{equation}
then
there exists a smooth minimizer $\hat\Omega$ for $\mathcal A^h$ such that $\hat\Omega\Delta\Omega_0\Subset \mathcal D(h\circ\phi)$.
\end{proposition}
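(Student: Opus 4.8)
The plan is to establish existence of a minimizer for $\mathcal A^h$ via the direct method in the calculus of variations, following the now-standard $\mu$-bubble existence argument of Gromov and its refinements. The key points are: (1) a priori confinement of any minimizing sequence to a fixed compact region of $\{-T < \phi < T\}$, which comes from the blow-up hypothesis \eqref{Eq: h blow up}; (2) compactness of minimizing sequences in $BV_{loc}$; (3) regularity of the limiting minimizer; and (4) smoothness of its reduced boundary. The main obstacle is step (1), the barrier argument: one must show that competitors cannot cheaply push mass into the regions where $\phi$ is close to $\pm T$, since there $h\circ\phi$ is very large positive or very large negative, so the $-\int(\chi_\Omega - \chi_{\Omega_0}) h\circ\phi$ term heavily penalizes deviation from $\Omega_0$ there.

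\medskip

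In detail, I would first record that $\mathcal A^h$ is bounded below on the admissible class. Since $\pm T$ are regular values of $\phi$, the superlevel sets $\{\phi > T - \delta\}$ and sublevel sets $\{\phi < -T + \delta\}$ have finite perimeter for small $\delta$, and by \eqref{Eq: h blow up} one can pick $\delta$ so that $h > 0$ on $(-T, -T+\delta)$ and $h < 0$ on $(T-\delta, T)$. For any admissible $\Omega$, writing $\Omega \Delta \Omega_0$ for the symmetric difference, the potential term $-\int(\chi_\Omega - \chi_{\Omega_0}) h\circ\phi$ is minimized by keeping $\Omega$ equal to $\Omega_0$ on $\{\phi < -T+\delta\} \cup \{\phi > T - \delta\}$; combined with the nonnegativity of the perimeter term this gives $\mathcal A^h(\Omega) \geq -C$ with $C$ depending only on $\sup_{[-T+\delta, T-\delta]} |h|$ times the volume of the compact region $\{-T+\delta \le \phi \le T-\delta\}$, which is finite because $\phi$ is proper. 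Fix a minimizing sequence $\Omega_j$. The barrier argument then shows that after modification we may assume every $\Omega_j$ agrees with $\Omega_0$ outside $\{-T+\delta \le \phi \le T-\delta\}$: if $\Omega_j$ differed from $\Omega_0$ on, say, a set of positive measure inside $\{\phi > T-\delta\}$, replacing $\Omega_j$ by $\Omega_j \cap \{\phi \le T - \delta'\}$ for a suitable regular value $T - \delta' \in (T-\delta, T)$ of $\phi$ (chosen via the coarea formula so that the perimeter does not increase too much) strictly decreases $\mathcal A^h$ because the potential gain $\int_{\Omega_j \cap \{\phi > T-\delta'\}} (-h\circ\phi)$ is large positive while the added perimeter along $\{\phi = T-\delta'\}$ is controlled; a symmetric argument handles the end $\phi = -T$. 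Hence the minimizing sequence is uniformly confined to the fixed compact set $K = \{-T+\delta \le \phi \le T - \delta\}$, with uniformly bounded perimeter.

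\medskip

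With uniform perimeter bounds and uniform confinement to a compact set, the standard $BV$ compactness theorem gives a subsequence with $\chi_{\Omega_j} \to \chi_{\hat\Omega}$ in $L^1_{loc}$ for some Caccioppoli set $\hat\Omega$ with $\hat\Omega \Delta \Omega_0 \Subset \{-T < \phi < T\}$; lower semicontinuity of perimeter and continuity of the (bounded, confined) potential term under $L^1_{loc}$ convergence — here using that $h\circ\phi$ is bounded on $K$ — show $\hat\Omega$ minimizes $\mathcal A^h$. Finally, interior regularity: $\hat\Omega$ is a minimizer of perimeter plus a bounded measurable forcing term, a prescribed-mean-curvature type functional, so by De Giorgi–Allard–Tamburelli regularity theory in dimension $3$ the reduced boundary $\partial^*\hat\Omega$ is a smooth embedded hypersurface and the singular set is empty (the obstruction dimension being $n - 7 < 0$); away from a neighborhood of $\partial K$ this is classical, and near $\{\phi = \pm T + \delta\}$ one notes that $\hat\Omega$ coincides with $\Omega_0$ there so no boundary of $\hat\Omega$ approaches $\{\phi = \pm T\}$. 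I would also remark that $\partial^*\hat\Omega$ has mean curvature exactly $h\circ\phi$ with respect to the outward normal, recorded for use in the second variation computation of Section~\ref{Sec: proof 1.4}. The one genuinely delicate point, as flagged, is making the cut-and-compare barrier estimate quantitative enough to force confinement, and for that the coarea formula choice of a good slicing level together with the sign and divergence of $h$ at the endpoints is exactly what is needed.
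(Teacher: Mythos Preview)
Your overall strategy—lower bound, barrier confinement, $BV$ compactness, regularity—is correct and matches the paper's. The lower bound you give (via the sign of $(\chi_\Omega-\chi_{\Omega_0})h\circ\phi$ on the collars) is fine, as are the compactness and regularity steps. The gap is exactly where you flag it: the confinement argument. Your coarea sketch does not close. The slicing level $T-\delta'$ you choose is allowed to depend on the competitor $\Omega_j$, so you do not obtain confinement to a \emph{fixed} compact set; and when $\mathrm{vol}(\Omega_j\cap\{\phi>T-\delta'\})$ is small the potential gain $\int_{\Omega_j\cap\{\phi>T-\delta'\}}(-h\circ\phi)$ need not dominate the added slice area $\mathcal H^2(\Omega_j\cap\{\phi=T-\delta'\})$, so the truncation may increase $\mathcal A^h$. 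Coarea alone gives no uniform control here.

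The paper's mechanism is a calibration, not a slicing. It foliates collar neighborhoods of $\phi^{-1}(\pm T)$ by the \emph{signed distance} $s$ to these hypersurfaces (smooth because $\pm T$ are regular values), builds a vector field $X=\partial_s$ there, and notes $\Div_g X=H_s^\pm$, the mean curvature of the distance leaves, which is bounded by compactness. The blow-up hypothesis on $h$ then furnishes a fixed $s_0$ with $\Div_g X\le h\circ\phi$ on $\Omega_{s_0}^-$ and $\Div_g X\le -h\circ\phi$ on $\Omega_{s_0}^+$. A single divergence-theorem computation yields
\[
\mathcal A^h\bigl(\Omega\cup\Omega_{s_0}^-\setminus\Omega_{s_0}^+\bigr)\le \mathcal A^h(\Omega)
\]
for \emph{every} admissible $\Omega$, giving the lower bound and uniform confinement simultaneously. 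Your argument becomes correct if you replace the coarea heuristic by this calibration; equivalently you could use $Y=\nabla\phi/|\nabla\phi|$ in place of $\partial_s$ (its divergence is the bounded mean curvature of the $\phi$-level sets), but the essential point is that the comparison must go through the divergence theorem with a vector field whose divergence is dominated by $\pm h\circ\phi$ on a fixed collar.
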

\begin{proof}
Define
\begin{equation*}
\mathcal C(\mathring M)=\{\text{Caccippoli sets $\Omega$ in $\mathring M$ such that $\Omega\Delta \Omega_0\Subset \mathcal D(h\circ\phi)$}\}
\end{equation*}
and
\begin{equation*}
I=\inf\{\mathcal A^h(\Omega):{\Omega\in \mathcal C(\mathring M)}\}.
\end{equation*}
First we show $I>-\infty$. For any $s>0$, denote
$$
\Sigma^\pm_s=\{x\in \mathcal D(h\circ\phi):\dist(x,\phi^{-1}(\pm T))=s\}.
$$
Since $\pm T$ are regular values of $\phi$, $\Sigma^\pm_s$ becomes a foliation around $\phi^{-1}(\pm T)$ when $s$ is small. From \eqref{Eq: h blow up} we can assume $H^-_s\leq h\circ\phi$ and $H^+_s\leq-h\circ\phi$ for $s\leq s_0$, where $s_0$ is a small positive constant and $H^\pm_s$ is the mean curvature of $\Sigma^{\pm}_s$ with respect to $\partial_s$. Let $\Omega^\pm_s$ be the region enclosed by $\Sigma^\pm_s$ and $\phi^{-1}(\pm T)$. Possibly decreasing the value of $s_0$, we can construct a smooth vector field $X$ such that $X=\partial_s$ on $\Omega^\pm_{s_0}$. It is clear that
$$
\Div_g X=H_s^-\leq h\circ\phi\quad \text{in}\quad \Omega_{s_0}^-
$$
and
$$
\Div_g X=H_s^+\leq -h\circ\phi\quad \text{in}\quad \Omega_{s_0}^+.
$$
Notice that for any region $\Omega\in \mathcal C(\mathring M)$ we have the following estimate
\begin{equation*}
\begin{split}
&\mathcal A^h(\Omega\cup\Omega^-_{s_0}\backslash\Omega^+_{s_0})-\mathcal A^h(\Omega)\\=&\mathcal H^{2}(\partial\Omega^-_{s_0}\backslash \Omega)-\mathcal H^{2}(\partial^*\Omega\cap\Omega^-_{s_0})+\mathcal H^{2}(\partial\Omega^+_{s_0}\cap \Omega)\\
&\qquad -\mathcal H^{2}(\partial^*\Omega\cap\Omega^+_{s_0})-\int_{\Omega^-_{s_0} \backslash\Omega}h\circ\phi\,\mathrm d\mathcal H^3_g+\int_{\Omega\cap\Omega^+_{s_0}}h\circ\phi\,\mathrm d\mathcal H^3_g\\
\leq&\mathcal H^{2}(\partial\Omega^-_{s_0}\backslash \Omega)-\mathcal H^{2}(\partial^*\Omega\cap\Omega^-_{s_0})+\mathcal H^{2}(\partial\Omega^+_{s_0}\cap \Omega)\\
&\qquad -\mathcal H^{2}(\partial^*\Omega\cap\Omega^+_{s_0})-\int_{\Omega^-_{s_0} \backslash\Omega}\Div_gX\,\mathrm d\mathcal H^3_g-\int_{\Omega\cap\Omega^+_{s_0}}\Div_gX\,\mathrm d\mathcal H^3_g\\
\leq &0,
\end{split}
\end{equation*}
since
\begin{equation*}
\begin{split}
\int_{\Omega^-_{s_0} \backslash\Omega}\Div_gX\,\mathrm d\mathcal H^3_g&=\int_{\partial^*(\Omega^-_{s_0} \backslash\Omega)}\langle X,\nu\rangle_g\,\mathrm d\mathcal H^{2}_g\\
&\geq \mathcal H^{2}(\partial\Omega^-_{s_0}\backslash \Omega)-\mathcal H^{2}(\partial^*\Omega\cap\Omega^-_{s_0})
\end{split}
\end{equation*}
and
\begin{equation*}
\begin{split}
\int_{\Omega\cap\Omega^+_{s_0}}\Div_gX\,\mathrm d\mathcal H^3_g&=\int_{\partial^*(\Omega\cap\Omega^+_{s_0})}\langle X,\nu\rangle_g\,\mathrm d\mathcal H^{2}_g\\
&\geq\mathcal H^{2}(\partial\Omega^+_{s_0}\cap \Omega) -\mathcal H^{2}(\partial^*\Omega\cap\Omega^+_{s_0}).
\end{split}
\end{equation*}
It follows
$$
\mathcal A^h(\Omega)\geq \mathcal A^h(\Omega\cup\Omega^-_{s_0}\backslash \Omega^+_{s_0})\geq -C\mathcal H^3(\mathcal D(h\circ\phi)),\quad\forall\,\Omega\in \mathcal C(\mathring M),
$$
where $C$ is a universal constant such that $|h\circ\phi|\leq C$ on $\mathcal D(h\circ\phi)-\Omega^-_{s_0}\cup\Omega^+_{s_0}$. Hence $I>-\infty$.

Now we establish the existence of a smooth minimizer for $\mathcal A^h$ in $\mathcal C(\mathring M)$. Let $\Omega_k$ be a sequence of regions in $\mathcal C(\mathring M)$ such that $\mathcal A^h(\Omega_k)\to I$ as $k\to\infty$. According to the discussion above we can assume $\Omega_k\Delta\Omega_0\subset \mathcal D(h\circ\phi)-\Omega^-_{s_0}\cup\Omega^+_{s_0}$. For $k$ large enough there holds
$$
\mathcal H^2(\partial^*\Omega_k)\leq I+1+C\mathcal H^3(\mathcal D(h\circ\phi)).
$$
From the compactness of Caccippoli sets, after taking the limit of $\Omega_k$ we can obtain $\hat\Omega\in \mathcal C(\mathring M)$ with $\mathcal A^h(\hat\Omega)=I$. The smoothness of $\partial\hat\Omega$ comes from the regularity theorem \cite[Theorem 2.2]{ZZ2018}.
\end{proof}

Now we give the proof for \eqref{Eq: main}.
\begin{proof}
Without loss of generality, we can assume $\inf\sec(g)=1$ from rescaling. If \eqref{Eq: main} is not true, then there is a $\phi\in \mathcal F_{\mathring M}$ with $\lip\phi <4/\pi$. With $\beta<1$ a constant to be determined later, we define
$$
h=-2\tan\left(\frac{\pi}{2\beta} t\right),\quad t\in(-\beta,\beta).
$$
If $\pm\beta$ are regular values of $\phi$, then we can apply Proposition \ref{Prop: existence of mu bubble} to obtain a smooth minimizer $\hat\Omega$ for functional $\mathcal A^h$. Since the boundary $\partial\hat\Omega$ is homologic to $\partial\Omega_0$ i.e. $\phi^{-1}(0)$, the non-spherical requirement on $\phi^*([\pt])$ implies that $\partial\hat\Omega$ has a component $\hat\Sigma$ with nonzero genus.
For any smooth function $\psi$ on $\hat\Sigma$, we take a smooth vector field $X$ such that $X=\psi\nu$ on $\hat\Sigma$ and $X$ vanishes outside a small neighborhood of $\hat\Sigma$, where $\nu$ is the outward unit normal vector field on $\hat\Sigma$. Denote $\Phi_t$ to be the flow generated by $X$, then we have
\begin{equation*}
\delta{\mathcal A}^h(\psi)=\left.\frac{\mathrm d}{\mathrm dt}\right|_{t=0} {\mathcal A}^h(\Phi_t(\hat\Omega))=\int_{\hat\Sigma}(\hat H-h\circ\phi)\psi\,\mathrm d\sigma_g=0,
\end{equation*}
where $\hat H$ is the mean curvature of $\hat\Sigma$ with respect to $\nu$. Since $\psi$ is arbitrary, we see $\hat H=h\circ\phi$ on $\hat\Sigma$.
The second variation formula yields
\begin{equation}\label{Eq: second variation formula}
\begin{split}
\delta^2{\mathcal A}^h(\psi,\psi)&=\left.\frac{\mathrm d^2}{\mathrm dt^2}\right|_{t=0}{\mathcal A}^h(\Phi_t(\hat\Omega))\\
&=\int_{\hat\Sigma}|\nabla\psi|^2-(\Ric(\nu,\nu)+|A|^2+\nu(h \circ \phi))\psi^2\,\mathrm d\sigma_g\\
&\geq 0.
\end{split}
\end{equation}

Now we deduce a contradiction from \eqref{Eq: second variation formula}. Since the sectional curvature $\sec(g)\geq 1$, we have $\Ric(\nu,\nu)\geq 2$ and
$|A|^2\geq (h\circ\phi)^2-\hat R+2$,
where $\hat R$ is the scalar curvature of $\hat\Sigma$ with the induced metric.
Notice also
$$
(h\circ\phi)^2+\nu(h\circ\phi)\geq 4\tan^2\left(\frac{\pi}{2\beta}t\right)-\pi\beta^{-1}\lip\phi\cos^{-2} \left(\frac{\pi}{2\beta}t\right).
$$
From Sard's theorem we can choose $\beta$ such that $\pm\beta$ are regular values of $\phi$ and $\pi\beta^{-1}\lip\phi <4$. Taking the testing function $\psi$ to be identically one, we conclude from \eqref{Eq: second variation formula} that
\begin{equation}
4\pi\chi(\hat\Sigma)=\int_{\hat\Sigma}\hat R\,\mathrm d\sigma_g\geq \int_{\hat\Sigma}4+(h\circ\phi)^2+\nu(h\circ\phi)\,\mathrm d\sigma_g>0,
\end{equation}
which leads to a contradiction.
\end{proof}

\section{The rigidity case}\label{Sec: rigidity}
In this section, $(\mathring M,g)$ is assumed to be a connected orientable open $3$-manifold with $\inf\sec(g)=1$ such that there is a $\phi\in\mathcal F_{\mathring M}$ satisfying $\lip\phi=4/\pi$. We are going to show that $(\mathring M,g)$ is isometric to the open manifold $\mathring M_0/\Gamma$ for some lattice $\Gamma$ and $\phi=4\rho/\pi$, where $\rho$ is the standard signed distance function on $\mathring M_0/\Gamma$.

The first lemma provides appropriate functions for us to construct approximating minimizing $\mu$-bubbles.
\begin{lemma}\label{Lem: function h epsilon}
For $\epsilon\in(0,1)$, there is a family of odd smooth functions $h_\epsilon$ defined on $(-T_\epsilon,T_\epsilon)$ such that
\begin{itemize}
\item $T_\epsilon\uparrow 1$ as $\epsilon\to 0$;
\item the derivative $h_\epsilon'$ is negative everywhere;
\item if $|t|<2/3$, then $4h_\epsilon'/\pi+h_\epsilon^2<-4$; if $2/3<|t|<1$, then $4h_\epsilon'/\pi+h_\epsilon^2>-4$.
\end{itemize}
\end{lemma}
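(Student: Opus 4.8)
The function $h_0(t)=-2\tan(\tfrac{\pi}{2}t)$ on $(-1,1)$ is the limiting profile from the proof of \eqref{Eq: main}: it is odd, strictly decreasing, blows up to $\mp\infty$ at $t=\pm1$, and obeys the Riccati identity $\tfrac{4}{\pi}h_0'+h_0^2\equiv-4$. My plan is to build $h_\epsilon$ as a small \emph{reparametrization} of $h_0$, that is $h_\epsilon=-2\tan(\tfrac{\pi}{2}\lambda_\epsilon)$ for an odd increasing diffeomorphism $\lambda_\epsilon$ close to the identity. This makes oddness and monotonicity of $h_\epsilon$ automatic and turns the third bullet into a sign condition on $1-\lambda_\epsilon'$.

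Concretely, fix once and for all a bounded smooth even function $\psi\colon(-1,1)\to\mathbf R$ with $\psi>0$ on $(-2/3,2/3)$, $\psi<0$ on $2/3<|t|<1$ (so necessarily $\psi(\pm 2/3)=0$), and $\int_0^1\psi>0$; such a function obviously exists, e.g.\ $\psi(t)=(4/9-t^2)\,\eta(t)$ with $\eta>0$ smooth and concentrated near $t=0$. For $\epsilon>0$ small put $\lambda_\epsilon(t)=t+\epsilon\int_0^t\psi$, so $\lambda_\epsilon$ is odd and smooth with $\lambda_\epsilon'=1+\epsilon\psi>0$; since $\lambda_\epsilon(1)=1+\epsilon\int_0^1\psi>1$, there is a unique $T_\epsilon\in(0,1)$ with $\lambda_\epsilon(T_\epsilon)=1$, and $\lambda_\epsilon$ restricts to an odd increasing diffeomorphism of $(-T_\epsilon,T_\epsilon)$ onto $(-1,1)$. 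Define $h_\epsilon=-2\tan(\tfrac{\pi}{2}\lambda_\epsilon)$ on $(-T_\epsilon,T_\epsilon)$.

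The claims are then quick to check. Oddness is immediate, and $h_\epsilon'=-\pi\sec^2(\tfrac{\pi}{2}\lambda_\epsilon)\,\lambda_\epsilon'<0$ gives the second bullet. For the third, a direct computation using $\tan^2=\sec^2-1$ yields
\[
\frac{4}{\pi}h_\epsilon'+h_\epsilon^2+4=4\bigl(1-\lambda_\epsilon'\bigr)\sec^2\!\bigl(\tfrac{\pi}{2}\lambda_\epsilon\bigr)=-4\,\epsilon\,\psi\,\sec^2\!\bigl(\tfrac{\pi}{2}\lambda_\epsilon\bigr),
\]
whose sign is opposite to that of $\psi$; this is exactly the asserted strict inequality on $|t|<2/3$ and on $2/3<|t|<T_\epsilon$. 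Finally, the relation $T_\epsilon=1-\epsilon\int_0^{T_\epsilon}\psi$ together with boundedness of $\psi$ forces $T_\epsilon\to1$ as $\epsilon\to0$; hence $\int_0^{T_\epsilon}\psi\to\int_0^1\psi>0$, so $T_\epsilon<1$ for small $\epsilon$, and differentiating the defining relation gives $\mathrm dT_\epsilon/\mathrm d\epsilon<0$ near $\epsilon=0$, whence $T_\epsilon\uparrow1$.

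The one genuinely delicate choice is the normalization $\int_0^1\psi>0$: it is precisely what pins the blow-up point $T_\epsilon$ of $h_\epsilon$ strictly below $1$ (without it one still gets $T_\epsilon\to1$, but possibly with $T_\epsilon\ge1$, which would later obstruct the compactness argument of Section~\ref{Sec: rigidity}); everything else is a routine consequence of the reparametrization identity above. I would also record the elementary fact that $\lambda_\epsilon(t)\uparrow1$ as $t\to T_\epsilon^-$, so $h_\epsilon\to\mp\infty$ at $t=\pm T_\epsilon$; hence, once $\pm T_\epsilon$ are taken to be regular values of $\phi$, Proposition~\ref{Prop: existence of mu bubble} applies to the functional $\mathcal A^{h_\epsilon}$.
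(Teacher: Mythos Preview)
Your proof is correct and follows essentially the same approach as the paper: the paper also sets $h_\epsilon(t)=-2\tan\bigl(\tfrac{\pi}{2}\alpha_\epsilon(t)\bigr)$ for an odd increasing reparametrization $\alpha_\epsilon$, taking the explicit choice $\alpha_\epsilon(t)=t+\epsilon\bigl(\sin(\tfrac{\pi}{2}t)-\tfrac{\pi}{4}t\bigr)$, which is precisely your construction with $\psi(t)=\tfrac{\pi}{4}\bigl(2\cos(\tfrac{\pi}{2}t)-1\bigr)$. The key identity $\tfrac{4}{\pi}h_\epsilon'+h_\epsilon^2+4=-4\epsilon\,\psi\,\sec^2(\tfrac{\pi}{2}\lambda_\epsilon)$ and the sign analysis are identical in both arguments.
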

\begin{proof}
The proof comes from an explicit construction. Define
$$\alpha_\epsilon(t)=t+\epsilon \left(\sin\left(\frac{\pi}{2}t\right)-\frac{\pi}{4}t\right)$$
and
$$h_\epsilon(t)=-2\tan\left(\frac{\pi}{2}\alpha_\epsilon(t)\right).$$
Clearly, $h_\epsilon$ is an odd smooth function defined on $(-T_\epsilon,T_\epsilon)$, where $T_\epsilon$ is the unique positive solution to the equation $\alpha_\epsilon(t)=1$. The behavior of $T_\epsilon$ comes from implicit function theorem applied to this equation. Through direct calculation, we have
$$
h_\epsilon'=-\pi\cos^{-2}(2\alpha_\epsilon)\left(1+\frac{\pi\epsilon}{4} \left(2\cos\left(\frac{\pi}{2}t\right)-1\right)\right)
$$
and
\begin{equation}\label{Eq: h epsilon}
\frac{4}{\pi}h_\epsilon'+h_\epsilon^2=-4-{\pi\epsilon}{\cos^{-2}(2\alpha_\epsilon)} \left(2\cos\left(\frac{\pi}{2}t\right)-1\right).
\end{equation}
The conclusion now follows from the fact $0<\epsilon<1$.
\end{proof}

From Sard's theorem, there is a sequence $\epsilon_k\to 0$ such that $\pm T_{\epsilon_k}$ are regular values of $\phi$. Let $h_k=h_{\epsilon_k}$. Using Proposition \ref{Prop: existence of mu bubble} we can find a minimizer $\hat\Omega_k$ for functional $\mathcal A^{h_{k}}$. Possibly passing to a subsequence, the sequence $\hat\Omega_k$ converges to a local minimizer $\hat\Omega$ for functional $\mathcal A^h$ with $h=-2\tan(\pi t/2)$ such that $\chi_{\hat\Omega_k}\to\chi_{\hat\Omega}$ in the $L^1_{loc}$ sense. Here local minimizer means that for any Caccippoli set $\Omega$ such that $\Omega\Delta\hat\Omega\Subset \mathring M$ we have $\mathcal A_U^h(\Omega)\geq \mathcal A_U^h(\hat\Omega)$ for any region $U$ with compact closure and $\Omega\Delta\hat\Omega\Subset U$, where
\begin{equation*}
\mathcal A^h_U(\Omega)=\mathcal H^2(\partial^*\Omega\cap U)-\int_{\Omega\cap U}h\circ\phi\,\mathrm d\mathcal H^3.
\end{equation*}

The following lemma is well-known, but we give a proof for completeness.
\begin{lemma}
Denote $\hat\mu_k$ and $\hat\mu$ to be the Radon measures associated with $\partial\hat\Omega_k$ and $\partial\hat\Omega$ respectively. Then $\hat\mu_k$ converges to $\hat\mu$ after passing to a subsequence. That is, for any continuous function $f$ with compact support there holds
\begin{equation}\label{Eq: Radon measure convergence}
\lim_{k\to\infty}\int_{\mathring M}f\,\mathrm d\hat\mu_k=\int_{\mathring M}f\,\mathrm d\hat\mu.
\end{equation}
\end{lemma}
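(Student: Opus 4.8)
The plan is the standard one: combine lower semicontinuity of perimeter under $L^1_{\mathrm{loc}}$ convergence with the minimality of the $\hat\Omega_k$, localised on small geodesic balls. First I would record a uniform local mass bound for the $\hat\mu_k$. Fix $x\in\mathring M$ and $r_0>0$ with $\overline{B_{r_0}(x)}\subset\mathring M$ and $r_0$ below the injectivity radius at $x$. Then $\phi(\overline{B_{r_0}(x)})$ is a compact subset of $(-1,1)$, so for $k$ large the function $h_k=h_{\epsilon_k}$ is defined near it and $|h_k\circ\phi|\le C$ on $B_{r_0}(x)$ uniformly in $k$ (as $\alpha_{\epsilon_k}\to\mathrm{id}$ uniformly on $[-1,1]$). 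Comparing $\hat\Omega_k$ with $\hat\Omega_k\cup B_r(x)$ in the minimality of $\mathcal A^{h_k}$ — a legitimate competitor for $k$ large, since $(\hat\Omega_k\cup B_r(x))\Delta\Omega_0\subseteq(\hat\Omega_k\Delta\Omega_0)\cup\overline{B_{r_0}(x)}\Subset\mathcal D(h_k\circ\phi)$ once $T_{\epsilon_k}>\sup_{\overline{B_{r_0}(x)}}|\phi|$ — yields $\hat\mu_k(B_r(x))=\mathcal H^2(\partial^*\hat\Omega_k\cap B_r(x))\le\mathcal H^2(\partial B_r(x))+C\mathcal H^3(B_r(x))\le Cr^2$ for $r<r_0$. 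Thus the $\hat\mu_k$ have uniformly bounded mass on compact subsets of $\mathring M$, so after passing to a subsequence $\hat\mu_k\to\nu$ weakly-$*$ for some Radon measure $\nu$ on $\mathring M$.

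Next I would identify $\nu$ with $\hat\mu$ by comparing them on geodesic balls $B_r(x)$ as above. Lower semicontinuity of perimeter gives, for every $r$,
\begin{equation*}
\hat\mu(B_r(x))=\mathcal H^2(\partial^*\hat\Omega\cap B_r(x))\le\liminf_{k\to\infty}\hat\mu_k(B_r(x)).
\end{equation*}
For the reverse estimate I would plug the cut-and-paste competitor $\Omega_k'=(\hat\Omega_k\setminus B_r(x))\cup(\hat\Omega\cap B_r(x))$ — which lies in $\mathcal C(\mathring M)$ for $k$ large because $\Omega_k'\Delta\Omega_0\subseteq(\hat\Omega_k\Delta\Omega_0)\cup\overline{B_r(x)}\Subset\mathcal D(h_k\circ\phi)$ — into the minimality of $\hat\Omega_k$, cancel the contributions lying outside $B_r(x)$ (which coincide for a.e.\ $r$), and use $\Omega_k'=\hat\Omega$ inside $B_r(x)$, to obtain for a.e.\ $r<r_0$
\begin{equation*}
\hat\mu_k(B_r(x))\le\mathcal H^2(\partial^*\hat\Omega\cap B_r(x))+\mathcal H^2\big((\hat\Omega_k\Delta\hat\Omega)\cap\partial B_r(x)\big)+\mathcal H^2\big(\partial^*\hat\Omega\cap\partial B_r(x)\big)+C\!\int_{B_r(x)}\!|\chi_{\hat\Omega_k}-\chi_{\hat\Omega}|\,\mathrm d\mathcal H^3.
\end{equation*}
Here the bulk term tends to $0$ by $L^1_{\mathrm{loc}}$ convergence (the integrand being bounded by $C|\chi_{\hat\Omega_k}-\chi_{\hat\Omega}|$ thanks to $|h_k\circ\phi|\le C$). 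Since $\int_0^{r_0}\mathcal H^2((\hat\Omega_k\Delta\hat\Omega)\cap\partial B_r(x))\,\mathrm dr=\mathcal H^3((\hat\Omega_k\Delta\hat\Omega)\cap B_{r_0}(x))\to0$ by the coarea formula, after a further subsequence $\mathcal H^2((\hat\Omega_k\Delta\hat\Omega)\cap\partial B_r(x))\to0$ for a.e.\ $r$; for a.e.\ $r$ we also have $\mathcal H^2(\partial^*\hat\Omega\cap\partial B_r(x))=0$ and $\nu(\partial B_r(x))=0$. For such $r$,
\begin{equation*}
\hat\mu(B_r(x))\le\liminf_k\hat\mu_k(B_r(x))=\nu(B_r(x))=\limsup_k\hat\mu_k(B_r(x))\le\hat\mu(B_r(x)),
\end{equation*}
so $\nu(B_r(x))=\hat\mu(B_r(x))$. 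By left-continuity of $r\mapsto\hat\mu(B_r(x))$ and $r\mapsto\nu(B_r(x))$ this holds for all $r<r_0$, hence for all sufficiently small geodesic balls, and the Besicovitch covering theorem then forces $\nu=\hat\mu$. This is precisely \eqref{Eq: Radon measure convergence}; the passages to subsequences serve only to extract $\nu$ and to secure the a.e.-$r$ convergence above.

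The uniform mass bound, the admissibility of the competitors, and the vanishing of the bulk term are routine. The one delicate step is the cancellation along $\partial B_r(x)$: one must select $r$ so that the slices of $\partial^*\hat\Omega_k$ and $\partial^*\hat\Omega$ on the geodesic sphere $\partial B_r(x)$ are rectifiable of finite $\mathcal H^1$-measure and so that the inner and outer traces of $\chi_{\hat\Omega_k}$ and $\chi_{\hat\Omega}$ on $\partial B_r(x)$ agree $\mathcal H^2$-almost everywhere with the corresponding slices; only then does regluing along $\partial B_r(x)$ add boundary area at most $\mathcal H^2((\hat\Omega_k\Delta\hat\Omega)\cap\partial B_r(x))$. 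This is the usual slicing lemma for sets of finite perimeter, and it is the point I would verify most carefully.
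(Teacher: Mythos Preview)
Your argument is correct and follows essentially the same route as the paper: the paper also proves $\hat\mu(U)\le\liminf_k\hat\mu_k(U)$ by lower semicontinuity of perimeter and $\hat\mu(K)\ge\limsup_k\hat\mu_k(K)$ by plugging the same cut-and-paste competitor $(\hat\Omega\cap K_\delta)\cup(\hat\Omega_k\setminus K_\delta)$ into the minimality of $\hat\Omega_k$, together with the slicing observations $\mathcal H^2(\partial\hat\Omega\cap\partial K_\delta)=0$ and $\mathcal H^2((\hat\Omega_k\Delta\hat\Omega)\cap\partial K_\delta)\to0$ for a.e.~$\delta$. The only cosmetic difference is that the paper works with general compact $K$ and tubular neighborhoods $K_\delta$ and concludes by the portmanteau-type approximation, whereas you specialize to geodesic balls, extract a weak-$*$ limit $\nu$ first, and identify $\nu=\hat\mu$ via Besicovitch.
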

\begin{proof}
Let $U$ be any region with compact closure in $\mathring M$. For any compact subset $K\subset U$, we denote
$$
K_\delta=\{x\in U:\dist(x,K)<\delta\},\quad \delta>0.
$$
There is a positive constant $\delta_0$ such that $K_\delta\subset U$ for any $\delta<\delta_0$. Since $\mathcal H^2(\partial\hat\Omega\cap \bar U)$ are bounded and $\mathcal H^3\left((\hat\Omega_k\Delta\hat\Omega)\cap\bar U\right)$ converges to $0$, for almost every $\delta$ we see
\begin{equation}\label{Eq: slicing measure zero}
\mathcal H^2(\partial\hat\Omega\cap\partial K_\delta)=0,
\end{equation}
and
\begin{equation}\label{Eq: slicing converge zero}
\mathcal H^2\left((\hat\Omega_k\Delta\hat\Omega)\cap \partial K_\delta\right)\to 0,\quad k\to \infty.
\end{equation}
Let $\Omega_k=(\hat\Omega\cap K_\delta)\cup (\hat\Omega_k-K_\delta)$ be a competitor for $\hat\Omega_k$. From $\mathcal A_U^{h_k}(\Omega_k)\geq \mathcal A_U^{h_k}(\hat\Omega_k)$ we obtain
$$
\mathcal H^2(\partial^*\Omega_k\cap U)\geq \mathcal H^2(\partial\hat\Omega_k\cap U)+\int _U(\chi_{\Omega_k}-\chi_{\hat\Omega_k})h_k\circ\phi\,\mathrm d\mathcal H^3_g.
$$
Notice that
\begin{equation*}
\begin{split}
\mathcal H^2(\partial^*\Omega_k\cap U)=\mathcal H^2(\partial\hat\Omega_k-&K_\delta)+\mathcal H^2(\partial\hat\Omega\cap K_\delta)\\&+\mathcal H^2(\partial\hat\Omega\cap\partial K_\delta)+\mathcal H^2\left((\hat\Omega_k\Delta\hat\Omega)\cap \partial K_\delta\right).
\end{split}
\end{equation*}
It follows from \eqref{Eq: slicing measure zero}, \eqref{Eq: slicing converge zero} and $L^1$ convergence that
$$
\mathcal H^2(\partial\hat\Omega\cap K_\delta)\geq \mathcal H^2(\partial\hat\Omega_k\cap K)-o(1),\quad k\to\infty.
$$
Taking $k\to\infty$ and then $\delta\to 0$, we conclude
\begin{equation}\label{Eq: upper semicontinuous}
\hat\mu(K)\geq \limsup_{k\to\infty}\hat\mu_k(K),\quad \forall\,\, K\,\,\text{compact}.
\end{equation}
From lower semi-continuity of perimeter, we also have
\begin{equation}\label{Eq: lower semi cotinuous}
\hat\mu(U)\leq \liminf_{k\to\infty}\hat\mu_k(U),\quad \forall\,\,U\,\,\text{open}.
\end{equation}
Now \eqref{Eq: Radon measure convergence} comes from a standard approximation argument using \eqref{Eq: upper semicontinuous} and \eqref{Eq: lower semi cotinuous}.
\end{proof}

The following proposition gives a description for boundary of locally minimizing $\mu$-bubble $\hat\Omega$.
\begin{proposition}\label{Prop: torus component}
The boundary $\partial\hat\Omega$ has a torus component $\hat\Sigma$ contained in a level set of $\phi$. Furthermore, the induced metric of $\hat\Sigma$ is flat and $g$ has constant sectional curvature $1$ at any point of $\hat\Sigma$. We also have $\mathrm d\phi(\nu)=4/\pi$, where $\nu$ is the outward unit normal vector field on $\hat\Sigma$.
\end{proposition}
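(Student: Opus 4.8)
The plan is to carry out the equality discussion of the argument in Section~\ref{Sec: proof 1.4} on the locally minimizing $\mu$-bubble $\hat\Omega$, the point being that the sign dichotomy built into Lemma~\ref{Lem: function h epsilon} keeps the relevant part of $\partial\hat\Omega_k$ from escaping to infinity, so that it survives in the limit.

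\emph{Anchoring and extraction of a limit component.} For each $k$, the set $\hat\Omega_k$ minimizes $\mathcal A^{h_k}$ with $\hat\Omega_k\Delta\Omega_0\Subset\phi^{-1}((-T_{\epsilon_k},T_{\epsilon_k}))$ and $[\partial\hat\Omega_k]=[\partial\Omega_0]=\phi^*([\pt])$; since $\phi^*([\pt])$ is non-spherical, $\partial\hat\Omega_k$ has a component $\hat\Sigma_k$ of nonzero genus, on which $\hat H_k=h_k\circ\phi$. Testing \eqref{Eq: second variation formula} for $\mathcal A^{h_k}$ with $\psi\equiv1$ and feeding in $\sec(g)\ge1$, the Gauss equation, $\lip\phi=4/\pi$ and $h_k'<0$ gives, exactly as in Section~\ref{Sec: proof 1.4},
\[
4\pi\chi(\hat\Sigma_k)=\int_{\hat\Sigma_k}\hat R\,\mathrm d\sigma_g\ \geq\ \int_{\hat\Sigma_k}\Bigl(4+\tfrac{4}{\pi}h_k'(\phi)+h_k(\phi)^2\Bigr)\mathrm d\sigma_g .
\]
By \eqref{Eq: h epsilon} the integrand is negative on $\{|t|<2/3\}$ and positive on $\{2/3<|t|<1\}$, so if $\hat\Sigma_k$ were disjoint from the fixed compact set $K:=\phi^{-1}([-2/3,2/3])$ the right-hand side would be strictly positive, contradicting $\chi(\hat\Sigma_k)\le0$; hence $\hat\Sigma_k\cap K\ne\emptyset$ for every $k$, and the same inequality moreover controls the geometry of $\hat\Sigma_k$ away from $K$. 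Combining the anchoring with the uniform local area bounds coming from \eqref{Eq: Radon measure convergence}, the uniform local mean curvature bounds (from $h_k\to-2\tan(\pi t/2)$ locally uniformly), stability, and the standard compactness theory for $\mu$-bubbles, one extracts a subsequence along which the $\hat\Sigma_k$ converge smoothly and locally, with multiplicity one near $K$, to a component $\hat\Sigma$ of $\partial\hat\Omega$ which meets $\bar K$; the careful choice of $h_\epsilon$ forces $\hat\Sigma$ to be a closed surface of nonzero genus, again satisfying $\hat H=h\circ\phi$ with $h=-2\tan(\pi t/2)$.

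\emph{The equality discussion on $\hat\Sigma$.} Since $h^2+\tfrac{4}{\pi}h'\equiv-4$ for $h=-2\tan(\pi t/2)$, applying the displayed inequality to the compact torus candidate $\hat\Sigma$ with the (now admissible) test function $\psi\equiv1$ yields $4\pi\chi(\hat\Sigma)\ge0$, and with nonzero genus this forces $\chi(\hat\Sigma)=0$, so $\hat\Sigma$ is a torus and every intermediate inequality becomes a pointwise equality. In particular $\Ric(\nu,\nu)\equiv2$ and the sectional curvature of $g$ on $T_p\hat\Sigma$ is $\equiv1$; since in dimension three $\sec(g)\ge1$ makes the curvature operator $\ge\mathrm{Id}$, having $\sec_g(T_p\hat\Sigma)=1$ and $\Ric(\nu,\nu)=2$ forces the curvature operator at $p$ to equal the identity, i.e. $g$ has constant sectional curvature $1$ at every point of $\hat\Sigma$. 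The equality $|\mathrm d\phi(\nu)|\le\lip\phi=\tfrac{4}{\pi}$ together with $h'<0$ forces $\mathrm d\phi(\nu)\equiv\tfrac{4}{\pi}$; as $\tfrac{4}{\pi}=\lip\phi\ge|\nabla\phi|$ this means $\nabla\phi=\tfrac{4}{\pi}\nu$ along $\hat\Sigma$, so $\phi$ is constant on the connected surface $\hat\Sigma$, i.e. $\hat\Sigma$ lies in a level set of $\phi$. Finally, $\delta^2\mathcal A^h(1,1)=0$ with $\delta^2\mathcal A^h\ge0$ forces the constant $1$ into the kernel of the Jacobi operator, i.e. $\Ric(\nu,\nu)+|A|^2+\nu(h\circ\phi)\equiv0$; combined with the pointwise equality $\Ric(\nu,\nu)+|A|^2+\nu(h\circ\phi)=-\hat R$ this gives $\hat R\equiv0$, so the induced metric of $\hat\Sigma$ is flat.

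\emph{Main difficulty.} The delicate step is the passage to the limit. On the open manifold $\mathring M$ the minimizing $\mu$-bubbles $\hat\Sigma_k$ are not a priori trapped in a fixed compact region, so a careless limit could be empty or a non-compact surface on which $\psi\equiv1$ is inadmissible and the topological information is lost; the function $h_\epsilon$ of Lemma~\ref{Lem: function h epsilon} — strictly subcritical on $\{|t|<2/3\}$ and strictly supercritical on $\{2/3<|t|<1\}$ — is engineered precisely so that the nonzero-genus component $\hat\Sigma_k$ is pinned to $K$, its area towards the ends $|t|\to1$ is controlled, and the limit procedure produces a genuine closed torus. This is the analogue in our setting of the device from \cite{CEM2019}, and it is the only part of the argument that is not a direct transcription of Section~\ref{Sec: proof 1.4}.
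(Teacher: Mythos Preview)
Your overall strategy coincides with the paper's: anchor the nonzero-genus components $\hat\Sigma_k$ to the compact set $K=\phi^{-1}([-2/3,2/3])$ via the sign pattern of Lemma~\ref{Lem: function h epsilon}, pass to a limit, and run the equality analysis of Section~\ref{Sec: proof 1.4}. The equality discussion you give (once $\hat\Sigma$ is known to be a closed torus) is correct and matches the paper's.

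The genuine gap is the sentence ``the careful choice of $h_\epsilon$ forces $\hat\Sigma$ to be a closed surface of nonzero genus.'' You assert this but do not prove it, and it is precisely the delicate point you flag in your final paragraph. Anchoring to $K$ and local area/curvature bounds only give smooth \emph{local} convergence to a (possibly non-compact) limit; nothing in your argument prevents $\hat\Sigma$ from being non-compact (in which case $\psi\equiv 1$ is inadmissible and your equality chain never starts) or from having genus dropped in the limit. Your remark that the inequality ``controls the geometry of $\hat\Sigma_k$ away from $K$'' is too vague: near $|t|=2/3$ the integrand $4+\tfrac{4}{\pi}h_k'+h_k^2$ is only barely positive, so no uniform area bound toward the ends follows directly.

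The paper resolves this by reversing the order of your two steps. \emph{Before} knowing the limit is closed, it extracts from the second-variation inequality on $\hat\Sigma_k$ the quantitative estimate
\[
\int_{\hat\Sigma_k}\Bigl|\mathrm d\phi(\nu_k)-\tfrac{4}{\pi}\Bigr|\,|h_k'\circ\phi|\,\mathrm d\sigma_g\;\leq\;C\,\epsilon_k\,\area(\hat\Sigma_k\cap K)\;\longrightarrow\;0,
\]
obtained by isolating the $\epsilon_k$-perturbation term in \eqref{Eq: h epsilon}. Passing to the limit gives $\mathrm d\phi(\nu)=4/\pi$ on $\hat\Sigma$ directly, hence $\nabla_{\hat\Sigma}\phi=0$, so $\hat\Sigma$ has a \emph{closed} component $\hat\Sigma'$ lying in a level set of $\phi$. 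The Radon-measure convergence then shows $\hat\Sigma'\subset\partial\hat\Omega$, and an argument as in \cite[Proposition~B.1]{MN2012}, together with connectedness of $\hat\Sigma_k$, forces each $\hat\Sigma_k$ to be a single-sheeted graph over $\hat\Sigma'$ for large $k$; this is what guarantees $\hat\Sigma=\hat\Sigma'$ and preserves the nonzero genus. Only \emph{after} this is established does the paper carry out the equality discussion you wrote. In short, your proposal inverts the logical order: you need compactness of $\hat\Sigma$ to run the equality discussion, but the paper obtains $\mathrm d\phi(\nu)=4/\pi$ (and hence compactness) \emph{from} the pre-limit inequalities, not from the equality discussion on the limit.
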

\begin{proof}
From the definition of $\mathcal F_{\mathring M}$ the boundary of $\hat\Omega_k$ has a component $\hat\Sigma_k$ with nonzero genus. It follows from the second variation formula that
\begin{equation}\label{Eq: second variation hat Sigma k}
0\geq 4\pi\chi(\hat\Sigma_k)\geq \int_{\hat\Sigma_k}4+(h_k\circ\phi)^2+\nu_k(h_k\circ\phi)\,\mathrm d\sigma_g,
\end{equation}
where $\nu_k$ is the outward unit normal vector field of $\hat\Sigma_k$.
Combined with Lemma \ref{Lem: function h epsilon}, surface $\hat\Sigma_k$ must have a non-empty intersection with the compact subset $K=\{-2/3\leq \phi\leq 2/3\}$. Since $\hat\Omega_k$ is a local minimizer of $\mathcal A^{h_k}$ and $h_k$ converges to $h$ smoothly, for any region $U$ with compact closure we have $\area(\hat\Sigma_k\cap U)\leq C$ for some universal constant $C$. From the curvature estimate for stable $h_k$-surfaces (\cite[Theorem 3.6]{ZZ2018}) and elliptic estimates, $\hat\Sigma_k$ converges smoothly to a surface $\hat\Sigma$ with $\hat\Sigma\cap K\neq \emptyset$ in locally graphical sense after passing to a subsequence. It follows from \eqref{Eq: h epsilon}, \eqref{Eq: second variation hat Sigma k} and the fact $\lip\phi=4/\pi$ that
\begin{equation*}
\begin{split}
\int_{\hat\Sigma_k}\left|\mathrm d\phi(\nu_k)-\frac{4}{\pi}\right||h_k'\circ \phi|\,\mathrm d\sigma_g&\leq -\int_{\hat\Sigma_k}4+(h_k\circ\phi)^2+\frac{4}{\pi}h_k'\circ\phi\,\mathrm d\sigma_g\\
&= \int_{\hat\Sigma_k}{\pi\epsilon_k}{\cos^{-2}(2\alpha_{\epsilon_k}\circ\phi)} \left(2\cos\left(\frac{\pi}{2}\phi\right)-1\right)\mathrm d\sigma_g\\
&\leq C\epsilon_k\area(\hat\Sigma_k\cap K)\to 0,\quad k\to \infty,
\end{split}
\end{equation*}
where we drop the negative part in the last inequality.
Denote $\nu$ to be the limit of $\nu_k$, then we have $\mathrm d\phi(\nu)=4/\pi$ on $\hat\Sigma$, which yields $\nabla_{\hat\Sigma}\phi=0$. Therefore surface $\hat\Sigma$ has a closed component $\hat\Sigma'$ contained in $\phi^{-1}(t_0)$ with some $t_0\in[-2/3,2/3]$. From \eqref{Eq: Radon measure convergence} we see $\hat\Sigma\subset \partial\hat\Omega$ and hence $\hat\Sigma'$ is a connected component of $\partial\hat\Omega$. In particular, there is an open neighborhood $V$ of $\hat\Sigma'$ such that $\hat\Sigma\cap V=\hat\Sigma'$. Using the same argument as in \cite[Proposition B.1]{MN2012}, combined with the connectedness of $\hat\Sigma_k$, we conclude that $\hat\Sigma_k$ is a graph over $\hat\Sigma'$ for $k$ sufficiently large. Hence $\hat\Sigma=\hat\Sigma'$ is a surface with nonzero genus. Now we can run a standard analysis on the second variation formula to obtain desired consequences. Since
$$
0\geq 4\pi\chi(\hat\Sigma)\geq\int_{\hat\Sigma}4+(h\circ\phi)^2+\nu(h\circ\phi)\,\mathrm d\sigma_g\geq 0,
$$
all the inequalities are in fact equalities, which yields that $\hat\Sigma$ is a torus, $\mathrm d\phi(\nu)=4/\pi$ and that $g$ has constant sectional curvature $1$ at any point of $\hat\Sigma$. Denote $\hat R$ to be the scalar curvature of $\hat\Sigma$ with induced metric, then the Jacobi operator becomes $-\Delta_{\hat\Sigma}+\hat R$ and hence the first eigenfunction is nonzero constant function combined with the $\mathcal A^h$-stability. It follows that the induced metric of $\hat\Sigma$ is flat.
\end{proof}

From above proposition we can construct a foliation around $\hat\Sigma$.

\begin{lemma}
There is a foliation $\{\tilde\Sigma_s\}_{-\epsilon< s< \epsilon}$ with $\tilde\Sigma_0=\hat\Sigma$ such that
\begin{itemize}
\item each $\tilde\Sigma_s$ is a graph over $\hat\Sigma$ with graph function $\tilde u_s$ along outward unit normal vector field $\nu$ such that
    \begin{equation}\label{Eq: function tilde us}
    \left.\frac{\partial}{\partial s}\right|_{s=0}\tilde u_s=1\quad \text{\rm and}\quad \fint_{\hat\Sigma}\tilde u_s\,\mathrm d\sigma_g=s;
    \end{equation}
\item $\tilde H_s-h\circ\phi$ is a constant function on $\tilde\Sigma_s$, where $\tilde H_s$ is the mean curvature of $\tilde \Sigma_s$.
\end{itemize}
\end{lemma}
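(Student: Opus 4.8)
The plan is to realize each leaf $\tilde\Sigma_s$ as a normal graph over the flat torus $\hat\Sigma$, solving the equation ``$\tilde H_s-h\circ\phi$ is constant'' by the implicit function theorem; the whole argument rests on the fact, supplied by Proposition \ref{Prop: torus component}, that the $\mathcal A^h$-Jacobi operator of $\hat\Sigma$ is exactly $-\Delta_{\hat\Sigma}$. First I would fix $\alpha\in(0,1)$ and set
\[
X=\Big\{w\in C^{2,\alpha}(\hat\Sigma):\fint_{\hat\Sigma}w\,\mathrm d\sigma_g=0\Big\},\qquad Y=\Big\{f\in C^{0,\alpha}(\hat\Sigma):\fint_{\hat\Sigma}f\,\mathrm d\sigma_g=0\Big\}.
\]
For $u\in C^{2,\alpha}(\hat\Sigma)$ of small norm, let $G_u\colon\hat\Sigma\to\mathring M$, $G_u(x)=\exp_x\big(u(x)\nu(x)\big)$, be the associated normal graph map; since $\hat\Sigma$ is closed this is an embedding onto a surface $\Sigma_u$ near $\hat\Sigma$ with $\Sigma_0=\hat\Sigma$, and I let $H_u\in C^{0,\alpha}(\hat\Sigma)$ be the mean curvature of $\Sigma_u$ at $G_u(x)$ with respect to the smooth unit normal field extending $\nu$. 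Then I would define, for $(w,s)$ near $(0,0)$ in $X\times\mathbf R$,
\[
\Phi(w,s)=\big(H_{w+s}-h\circ\phi\circ G_{w+s}\big)-\fint_{\hat\Sigma}\big(H_{w+s}-h\circ\phi\circ G_{w+s}\big)\,\mathrm d\sigma_g\ \in\ Y,
\]
where $w+s$ denotes the function $w$ plus the constant $s$. Since $\phi\equiv t_0$ on $\hat\Sigma$ with $|t_0|\le 2/3<1$, the composition $h\circ\phi$ is smooth near $\hat\Sigma$, so $\Phi$ is a $C^1$ map of Banach spaces, and $\Phi(0,0)=0$ because $H_0=\hat H=h\circ\phi$ on $\hat\Sigma$ (the first variation of $\mathcal A^h$ vanishes along $\partial\hat\Omega$).

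Next I would differentiate. Under a normal variation $w\nu$ the mean curvature changes to first order by $-\Delta_{\hat\Sigma}w-(\Ric(\nu,\nu)+|A|^2)w$ and $h\circ\phi\circ G_{w}$ changes by $w\,(h'\circ\phi)\,\mathrm d\phi(\nu)$, so the linearization of $w\mapsto H_{w+s}-h\circ\phi\circ G_{w+s}$ at $(0,0)$ is precisely the $\mathcal A^h$-Jacobi operator $\mathcal L$ of \eqref{Eq: second variation formula}. By Proposition \ref{Prop: torus component}, $\hat\Sigma$ is a flat torus and $\mathcal L=-\Delta_{\hat\Sigma}$; hence $D_w\Phi(0,0)w=-\Delta_{\hat\Sigma}w$ on $X$, which is an isomorphism onto $Y$ by standard elliptic theory (on a closed surface the kernel of $-\Delta_{\hat\Sigma}$ is exactly the constants, excluded from $X$, and the range is exactly the functions of zero mean). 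The implicit function theorem then yields $\epsilon>0$ and a $C^1$ curve $s\mapsto w(s)\in X$ with $w(0)=0$ and $\Phi(w(s),s)=0$ for $|s|<\epsilon$. Setting $\tilde u_s=w(s)+s$ and $\tilde\Sigma_s=G_{\tilde u_s}(\hat\Sigma)$, one has $\tilde\Sigma_0=\hat\Sigma$ and $\fint_{\hat\Sigma}\tilde u_s\,\mathrm d\sigma_g=s$ immediately, while $\Phi(w(s),s)=0$ says that $(\tilde H_s-h\circ\phi)\circ G_{\tilde u_s}$ is a constant function on $\hat\Sigma$; since $G_{\tilde u_s}$ is a diffeomorphism onto $\tilde\Sigma_s$, this is the same as saying that $\tilde H_s-h\circ\phi$ is constant on $\tilde\Sigma_s$, which is the second bullet point.

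To get the normalization $\partial_s|_{s=0}\tilde u_s=1$ I would differentiate $\Phi(w(s),s)=0$ at $s=0$ to obtain $D_w\Phi(0,0)w'(0)=-D_s\Phi(0,0)$, and note that $D_s\Phi(0,0)=\mathcal L(1)-\fint_{\hat\Sigma}\mathcal L(1)\,\mathrm d\sigma_g=-\Delta_{\hat\Sigma}1=0$ because the constant function $1$ lies in the kernel of $\mathcal L=-\Delta_{\hat\Sigma}$; since $D_w\Phi(0,0)$ is invertible this forces $w'(0)=0$, whence $\partial_s|_{s=0}\tilde u_s=w'(0)+1=1$. Finally, since $\partial_s\tilde u_s$ depends continuously on $s$ and $\hat\Sigma$ is compact, after shrinking $\epsilon$ one has $\partial_s\tilde u_s>0$ on $\hat\Sigma$ for every $|s|<\epsilon$, so the graph functions $\tilde u_s$ are strictly increasing in $s$; hence the $\tilde\Sigma_s$ are pairwise disjoint and sweep out a neighborhood of $\hat\Sigma$, i.e.\ $\{\tilde\Sigma_s\}_{-\epsilon<s<\epsilon}$ is the claimed foliation. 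I expect the only genuinely nontrivial ingredient is the identification $\mathcal L=-\Delta_{\hat\Sigma}$ --- equivalently, that the kernel of the $\mathcal A^h$-Jacobi operator of $\hat\Sigma$ consists of constants alone --- which is exactly what Proposition \ref{Prop: torus component} provides; granting it, both the solvability of $\Phi=0$ and the identity $w'(0)=0$ (hence the normalization) are immediate, and everything else is routine elliptic theory.
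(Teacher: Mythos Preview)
Your proposal is correct and essentially identical to the paper's proof. The only cosmetic difference is packaging: the paper bundles the two constraints into a single map $\Psi:C^{2,\alpha}(\hat\Sigma)\to\mathring C^\alpha(\hat\Sigma)\times\mathbf R$, $\Psi(u)=(H_u-h\circ\phi-\fint(H_u-h\circ\phi),\fint u)$, and applies the inverse function theorem, whereas you decompose $u=w+s$ with $w\in X$ and apply the implicit function theorem; both reduce to the invertibility of $-\Delta_{\hat\Sigma}$ on mean-zero functions, which is exactly the content of $\mathcal L=-\Delta_{\hat\Sigma}$ supplied by Proposition~\ref{Prop: torus component}.
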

\begin{proof}
Let $H_u$ be the mean curvature of the graph over $\hat\Sigma$ with graph function $u$ along outward unit normal vector field $\nu$. We can define a map $\Psi:C^{2,\alpha}(\hat\Sigma)\to \mathring C^\alpha(\hat\Sigma)\times \mathbf R$ such that
\begin{equation*}
 \Psi(u)=\left(H_u-h\circ\phi-\fint_{\hat\Sigma}H_u-h\circ\phi\,\mathrm d\sigma_g,\fint_{\hat\Sigma}u\,\mathrm d\sigma_g\right),
\end{equation*}
where
$$
\mathring C^\alpha(\hat\Sigma)=\{f \in C^\alpha(\hat\Sigma):\int_{ \hat\Sigma}f\,\mathrm d\mu=0\}.
$$
The linearized operator of $\Psi$ at $u=0$ is
\begin{equation*}
D\Psi|_{u=0}(v)=\left(-\Delta_{\hat\Sigma} v,\fint_{\hat\Sigma}v\,\mathrm d\sigma_g\right).
\end{equation*}
Clearly this operator is invertible and it follows from inverse function theorem that there is a family of functions $\tilde u_s$ with $-\epsilon< s< \epsilon$ such that $H_{\tilde u_s}-h\circ\phi$ is a constant function and
$$
\left.\frac{\partial}{\partial s}\right|_{s=0}\tilde u_s=1,\quad \fint_{\hat\Sigma}\tilde u_s\,\mathrm d\sigma_g=s.
$$
With a smaller $\epsilon$, graphs $\tilde\Sigma_s$ over $\hat\Sigma$ with graph function $\tilde u_s$ give the desired foliation.
\end{proof}

The foliation provides us a family of local minimizers for functional $\mathcal A^h$.

\begin{proposition}\label{Prop: minimizing foliations}
Assume that $\partial\hat\Omega-\hat\Sigma$ is disjoint to the region enclosed by $\hat\Sigma$ and $\tilde\Sigma_s$. Let $\tilde\Omega_s$ be the region enclosed by $\partial\hat\Omega-\hat\Sigma$ and $\tilde\Sigma_s$. Then $\tilde\Omega_s$ is still a local minimizer for $\mathcal A^h$.
\end{proposition}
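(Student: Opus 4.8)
The plan is to show first that the function $\lambda(s):=\tilde H_s-h\circ\phi$, which by construction is a well-defined constant on each leaf $\tilde\Sigma_s$ with $\lambda(0)=0$ (since $\hat\Sigma=\tilde\Sigma_0$ satisfies $\hat H=h\circ\phi$), in fact vanishes identically, and then to recognize $\tilde\Omega_s$ as a local minimizer by a calibration argument. As $\lambda(s)$ depends only on the geometric leaf $\tilde\Sigma_s$, I will reparametrize the foliation so that its deformation vector field is purely normal with speed $\rho_s>0$. The first variation of the mean curvature along the leaves then reads
$$
\lambda'(s)=-\Delta_{\tilde\Sigma_s}\rho_s-q_s\rho_s,\qquad q_s=|A_s|^2+\Ric(\nu_s,\nu_s)+\nu_s(h\circ\phi),
$$
where $A_s$ and $\nu_s$ are the second fundamental form and outward unit normal of $\tilde\Sigma_s$.

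Since $\lambda'(s)$ is constant on $\tilde\Sigma_s$, dividing by $\rho_s$ and integrating gives
$$
\lambda'(s)\int_{\tilde\Sigma_s}\rho_s^{-1}\,\mathrm d\sigma_g=-\int_{\tilde\Sigma_s}\frac{|\nabla\rho_s|^2}{\rho_s^2}\,\mathrm d\sigma_g-\int_{\tilde\Sigma_s}q_s\,\mathrm d\sigma_g\le-\int_{\tilde\Sigma_s}q_s\,\mathrm d\sigma_g.
$$
Each $\tilde\Sigma_s$ is a small normal graph over the torus $\hat\Sigma$, hence itself a torus, so $\int_{\tilde\Sigma_s}K_{\tilde\Sigma_s}\,\mathrm d\sigma_g=0$. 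Bounding $|A_s|^2\ge\tilde H_s^2-2K_{\tilde\Sigma_s}+2$ by the Gauss equation and $\Sec(g)\ge1$, $\Ric(\nu_s,\nu_s)\ge2$, and $\nu_s(h\circ\phi)=h'(\phi)\,\mathrm d\phi(\nu_s)\ge\frac{4}{\pi}h'(\phi)$ (using $h'<0$ and $\mathrm d\phi(\nu_s)\le\lip\phi=\frac{4}{\pi}$), then integrating and using the elementary identity $4+h^2+\frac{4}{\pi}h'\equiv0$ together with $\tilde H_s=h\circ\phi+\lambda(s)$, I would obtain
$$
\int_{\tilde\Sigma_s}q_s\,\mathrm d\sigma_g\ \ge\ \int_{\tilde\Sigma_s}\Big(4+\tilde H_s^2+\frac{4}{\pi}h'(\phi)\Big)\mathrm d\sigma_g=\int_{\tilde\Sigma_s}\big(\lambda(s)^2+2\lambda(s)h\circ\phi\big)\mathrm d\sigma_g\ \ge\ 2\lambda(s)\int_{\tilde\Sigma_s}h\circ\phi\,\mathrm d\sigma_g,
$$
so that $\lambda'(s)\int_{\tilde\Sigma_s}\rho_s^{-1}\,\mathrm d\sigma_g\le-2\lambda(s)\int_{\tilde\Sigma_s}h\circ\phi\,\mathrm d\sigma_g$. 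Since the weight $\int_{\tilde\Sigma_s}\rho_s^{-1}$ is positive and $\lambda(0)=0$, a Gronwall argument yields $\lambda(s)\le0$ for $s\ge0$ and $\lambda(s)\ge0$ for $s\le0$.

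To upgrade this to $\lambda\equiv0$ I would invoke the minimality of $\hat\Omega$. Letting $\Omega_\tau$ be the region enclosed by $\partial\hat\Omega-\hat\Sigma$ and $\tilde\Sigma_\tau$, one has $\Omega_0=\hat\Omega$, $\Omega_\tau\Delta\hat\Omega$ compactly contained in the foliated collar, and $\frac{\mathrm d}{\mathrm d\tau}\mathcal A^h(\Omega_\tau)=\lambda(\tau)\int_{\tilde\Sigma_\tau}\rho_\tau\,\mathrm d\sigma_g$; since $\hat\Omega$ is a local minimizer, $\int_0^\tau\lambda(u)\big(\int_{\tilde\Sigma_u}\rho_u\,\mathrm d\sigma_g\big)\mathrm du\ge0$ for all small $\tau$, which together with the sign of $\lambda$ above and $\int_{\tilde\Sigma_u}\rho_u>0$ forces $\lambda\equiv0$, i.e.\ $\tilde H_s\equiv h\circ\phi$ on every leaf. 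Then, on the collar $W$ swept out by the leaves --- taken, via the hypothesis, small enough to be disjoint from $\partial\hat\Omega-\hat\Sigma$ --- the unit vector field $X$ equal to $\nu_{s'}$ along each $\tilde\Sigma_{s'}$ has $\Div_g X=\tilde H_{s'}=h\circ\phi$, hence $\Div_g X\equiv h\circ\phi$ on $W$. For any $\Omega$ with $\Omega\Delta\tilde\Omega_s\Subset W$, the divergence theorem gives $\int_W(\chi_\Omega-\chi_{\tilde\Omega_s})h\circ\phi\,\mathrm d\mathcal H^3=\int_{\partial^*\Omega\cap W}\langle X,\nu_\Omega\rangle_g\,\mathrm d\sigma_g-\mathcal H^2(\partial^*\tilde\Omega_s\cap W)$, whence
$$
\mathcal A^h_W(\Omega)-\mathcal A^h_W(\tilde\Omega_s)=\mathcal H^2(\partial^*\Omega\cap W)-\int_{\partial^*\Omega\cap W}\langle X,\nu_\Omega\rangle_g\,\mathrm d\sigma_g\ \ge\ 0
$$
because $|X|_g\le1$; thus $\tilde\Omega_s$ minimizes $\mathcal A^h_W$ among competitors supported in $W$. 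Since $\tilde\Omega_s$ coincides with the local minimizer $\hat\Omega$ outside $W$, a routine cut-and-paste (as in \cite{BBN2010,Z2019}) promotes this to local minimality of $\tilde\Omega_s$ for $\mathcal A^h$.

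The main obstacle I anticipate is the first half of the argument: extracting a definite sign for $\lambda$ from the geometry --- the one step where $\Sec(g)\ge1$, the sharp bound $\lip\phi=\frac{4}{\pi}$, and the toral topology of the leaves enter at once, through the identity $4+h^2+\frac{4}{\pi}h'\equiv0$ --- and then combining it with the minimality of $\hat\Omega$ to kill $\lambda$. One must also check that the leaves $\tilde\Sigma_s$ stay embedded tori so that Gauss--Bonnet applies (automatic, being small normal graphs over $\hat\Sigma$) and that the collar can be chosen disjoint from $\partial\hat\Omega-\hat\Sigma$, which is exactly what the hypothesis guarantees; the closing calibration step is then entirely standard.
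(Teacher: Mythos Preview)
Your argument is correct but differs from the paper's in the first half. To obtain $\lambda\equiv0$, the paper argues by contradiction: if $\lambda$ is not identically zero then local minimality of $\hat\Omega$ forces $\lambda(\tau)>0$ for some $\tau>0$; one then introduces a perturbed prescribed-mean-curvature function $\bar h_\delta$ solving $\frac{4}{\pi}\bar h_\delta'+\bar h_\delta^2=-4+\delta$ with $\bar h_\delta(t_0)=h(t_0)+\delta$, so that $\hat\Sigma$ and $\tilde\Sigma_\tau$ become barriers, solves a \emph{new} minimization for $\mathcal A^{\bar h_\delta}$ between them, and derives a contradiction from the second variation on a non-spherical component of the new minimizer. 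Your route---evolving $\lambda$ along the foliation, using Gauss--Bonnet on the toral leaves together with the identity $4+h^2+\frac{4}{\pi}h'\equiv0$ to get a linear differential inequality, and closing by Gronwall plus the minimality of $\hat\Omega$---avoids the auxiliary minimization and stays entirely within the given foliation; it is in the spirit of the CMC-foliation arguments in \cite{BBN2010}. The cost is that you use the fine structure of the leaves (first variation of $H$, positive lapse) rather than just their barrier role; the paper's barrier approach is more robust to the precise form of the foliation. For the second half, the paper is more direct than your calibration: once $\lambda\equiv0$ one simply computes $\mathcal A^h_U(\tilde\Omega_s)=\mathcal A^h_U(\hat\Omega)$ by integrating $\frac{d}{d\tau}\mathcal A^h(\tilde\Omega_\tau)=\lambda(\tau)\int_{\tilde\Sigma_\tau}\rho_\tau=0$, and then for any competitor $\Omega$ with $\Omega\Delta\tilde\Omega_s\Subset U$ (enlarging $U$ to contain the slab between $\hat\Sigma$ and $\tilde\Sigma_s$) one has $\Omega\Delta\hat\Omega\Subset U$, whence $\mathcal A^h_U(\Omega)\ge\mathcal A^h_U(\hat\Omega)=\mathcal A^h_U(\tilde\Omega_s)$. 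This bypasses your calibration/cut-and-paste entirely.
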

\begin{proof}
We only give a proof for $\tilde\Sigma_s$ with $s>0$ since the argument is valid when $s<0$ as well. First let us show that $\tilde H_\tau-h\circ\phi$ is identically zero for any $0<\tau<s$. Otherwise there is a $\tau$ such that $\tilde H_\tau-h\circ\phi$ is positive since $\hat\Omega$ is a local minimizer for $\mathcal A^h$. Assume that $\hat\Sigma$ is contained in $ \phi^{-1}(t_0)$. We consider the following differential equation
\begin{equation*}
\frac{4}{\pi}\bar h_\delta'+\bar h_\delta^2=-4+\delta,\quad \bar h_\delta(t_0)=h(t_0)+\delta,\quad \delta>0.
\end{equation*}
With $\delta$ chosen to be small enough, we have $\tilde H_0<\bar h_\delta\circ\phi$ on $\hat\Sigma$ and $\tilde H_\tau>\bar h_\delta\circ\phi$ on $\hat\Sigma_\tau$. Therefore, we can find a smooth minimizer $\bar\Omega$ for $\mathcal A^{\bar h_\delta}$ in the region enclosed by $\hat\Sigma$ and $\tilde\Sigma_\tau$ with $\hat\Sigma\subset \partial\bar\Omega$. Since the projection map restricted to $\partial\bar\Omega-\hat\Sigma$ onto $\hat\Sigma$ has degree one, $\partial\bar\Omega-\hat\Sigma$ has a component $\bar\Sigma$ with nonzero genus. It follows from the second variation formula that
\begin{equation*}
0\geq 4\pi\chi(\bar\Sigma)\geq\int_{\bar\Sigma}4+(h\circ\phi)^2+\nu(h\circ\phi)\,\mathrm d\sigma_g\geq \delta\area(\bar\Sigma)>0,
\end{equation*}
which yields a contradiction. Now it is quick to see that $\tilde\Omega_s$ is still a local minimizer for $\mathcal A^h$. Denote $\Omega_\tau$ to be the region enclosed by $\hat\Sigma$ and $\tilde\Sigma_\tau$. Notice that for any region $U$ with compact closure containing $\Omega_s$ there holds
\begin{equation*}
\begin{split}
\mathcal A_U^h(\tilde\Omega_s)&=\mathcal A_U^h(\hat\Omega)+\area(\tilde\Sigma_s)-\area(\hat\Sigma)-\int_{\Omega_s}h\circ\phi\,\mathrm d\mu_g\\
&=\mathcal A_U^h(\hat\Omega)+\int_0^s\mathrm d\tau\int_{\tilde\Sigma_\tau}(\tilde H_\tau-h\circ\phi)\tilde f_\tau\,\mathrm d\sigma_g\\
&=\mathcal A_U^h(\hat\Omega),
\end{split}
\end{equation*}
where $\tilde f_\tau$ is the lapse function of $\tilde\Sigma_\tau$ moving along the foliation. If $\Omega$ satisfies $\Omega\Delta\tilde\Omega_s\Subset U$, then
$$
\mathcal A_U^h(\Omega)\geq \mathcal A_U^h(\hat\Omega)=\mathcal A^h_U(\tilde\Omega_s).
$$
Therefore we complete the proof.
\end{proof}
Now we are ready to prove the rigidity.
\begin{proof}
First we show that $\tilde\Sigma_s$ are $s$-equadistant surfaces to $\hat\Sigma$ contained in level sets of $\phi$. With the foliation we can write the metric as $g=\tilde f_s^2\mathrm ds^2+\tilde g_s$. Since $\tilde \Omega_s$ is $\mathcal A^h$-minimizing, $\tilde\Sigma_s$ satisfies the properties in Proposition \ref{Prop: torus component}. It follows from $\mathcal A^h(\tilde\Omega_s)\equiv\mathcal A^h(\hat\Omega)$ and the second variation formula that $-\Delta\tilde f_s=0$, which implies that $\tilde f_s$ is a constant function. Combined with \eqref{Eq: function tilde us}, $\tilde f_s\equiv 1$ and $\tilde\Sigma_s$ is $s$-equadistant surface to $\hat\Sigma$. Since $\mathrm d\phi(\nu)=4/\pi$ and $\hat\Sigma$ is contained in level sets of $\phi$, so is $\tilde\Sigma_s$.
In particular, we know that $\{\tilde\Sigma_s\}_{-\epsilon<s<\epsilon}$ is a geodesic flow starting from $\hat\Sigma$. Extending it to a maximum solution $\{\tilde\Sigma_s\}_{a<s<b}$, we are going to show
\begin{itemize}
\item $\tilde\Sigma_s$ does not intersect $\partial\hat\Omega-\hat\Sigma$;
\item $\tilde\Sigma_s$ satisfies properties in Proposition \ref{Prop: torus component};
\item Denote $\phi(s)$ to be the value of $\phi$ on $\tilde\Sigma_s$, then $a$ and $b$ satisfy
\begin{equation}\label{Eq: limit phi}
\lim_{s\to a^+}\phi(s)=-1\quad \text{and}\quad \lim_{s\to b^-}\phi(s)=1.
\end{equation}
\end{itemize}
We verify these one by one. Assume by contradiction that one slice $\tilde\Sigma_{s_0}$ has non-empty intersection with $\partial\hat\Omega-\hat\Sigma$, we can take $s_0$ to be the first such moment. Namely, for any $s$ between $0$ and $s_0$ surface $\tilde\Sigma_s$ has no intersection with $\partial\hat\Omega-\hat\Sigma$. For convenience we assume $s_0>0$. Denote $\tilde\Omega_s$ to be the region enclosed by $\tilde\Sigma_s$ and $\partial\hat\Omega-\hat\Sigma$. We define
\begin{equation*}
\mathcal S=\{0<s<s_0:\,\text{$\tilde\Omega_s$ is locally $\mathcal A^h$-minimizing}\}.
\end{equation*}
From previous discussion, $\mathcal S$ is non-empty and relatively open in $(0,s_0)$. Notice that we have
$$
\sup_{0\leq s\leq s_0}\sup_{\tilde\Sigma_s}|\phi|\leq t_0<1
$$
from the smoothness of $\phi$. Combined with Proposition \ref{Prop: torus component}, the principal curvatures of $\tilde\Sigma_s$ with $s\in\mathcal S$ satisfies
$$
\lambda_1\lambda_2=-1\quad \text{and}\quad |\lambda_1+\lambda_2|\leq C(t_0),
$$
which implies $|A|\leq C(t_0)$. Furthermore, we have uniform area bound for $\tilde \Sigma_s$ with $s\in\mathcal S$ due to the locally $\mathcal A^h$-minimizing property. Therefore $\{\tilde\Sigma_s:s\in S\}$ is compact in $C^\infty$ topology and $\mathcal S$ is relatively closed in $(0,s_0)$. This yields $\mathcal S=(0,s_0)$. Applying the compactness again, we know that $\tilde\Omega_{s_0}$ is $\mathcal A^h$-minimizing and then smooth, which leads to a contradiction since $\tilde \Sigma_{s_0}$ intersects other boundary components of $\tilde\Omega_{s_0}$. If $\tilde\Sigma_s$ has no intersection with $\partial\hat\Omega-\hat\Sigma$, then above argument actually tells us that all $\tilde\Omega_s$ are locally $\mathcal A^h$-minimizing and $\tilde\Sigma_s$ satisfies the properties in Proposition \ref{Prop: torus component}. In particular, $\phi(s)$ is monotone increasing and the limits in \eqref{Eq: limit phi} make sense. If \eqref{Eq: limit phi} does not hold, then we can still obtain the compactness from the properness of $\phi$. Then the geodesic flow can be extended through endpoints, which is impossible.

From the connectedness of $\mathring M$ it follows $\mathring M=(a,b)\times\hat\Sigma$. Let $\rho=\pi\phi/4$, then it is a smooth distance function with range $(-\pi/4,\pi/4)$ and $h\circ\phi=-2\tan(2\rho)$. Therefore we can write
\begin{equation*}
\mathring M= \left(-\frac{\pi}{4},\frac{\pi}{4}\right)\times\hat\Sigma\quad\text{and}\quad  g=\mathrm d\rho^2+\tilde g_\rho,
\end{equation*}
where $\tilde g_\rho$ are flat metrics on $\hat\Sigma$ and $g$ has constant sectional curvature 1. Denote $\hat\Sigma_\rho=\hat\Sigma\times\{\rho\}$, then $\hat\Sigma_\rho$ has constant mean curvature $-2\tan(2\rho)$. Combined with the Gauss equation, we conclude that the principal curvatures of $\hat\Sigma_\rho$ are
\begin{equation}\label{Eq: principal curvatures}
\lambda_1=\cot\left(\rho+\frac{\pi}{4}\right) \quad\text{and}\quad \lambda_2=-\tan\left(\rho+\frac{\pi}{4}\right).
\end{equation}
Now we lift the metric to the universal covering $\mathring M_0=(-\pi/4,\pi/4)\times\mathbf R^2$, denoted by $g_0=\mathrm d\rho^2+\bar g_\rho$. Fixing a coordinate $(x,y)$ on $\mathbf R^2$ such that $\bar g_0=\mathrm dx^2+\mathrm dy^2$. From \eqref{Eq: principal curvatures} the second fundamental form of $\{\rho=0\}$ is
\begin{equation}
A=-\cos\left(2\theta(x,y)\right)\mathrm dx^2+2\sin\left(2\theta(x,y)\right)\mathrm dx\mathrm dy+\cos\left(2\theta(x,y)\right)\mathrm dy^2.
\end{equation}
Since $g_0$ has constant curvature $1$, the Codazzi equation gives
$$
\sin\left(2\theta(x,y)\right)\theta'_{y}(x,y)=\cos\left(2\theta(x,y)\right)\theta'_x (x,y)
$$
and
$$
-\sin\left(2\theta(x,y)\right)\theta'_x(x,y)=\cos\left(2\theta(x,y)\right)\theta'_ y(x,y).
$$
It follows that $\theta(x,y)$ is a constant function and we can assume $A=\mathrm dx^2-\mathrm dy^2$ after a possible rotation. From the Jacobi equation we conclude
$$
g_0=\mathrm d\rho^2+2\sin^2\left(\rho+\frac{\pi}{4}\right)\mathrm dx^2+2\cos^2\left(\rho+\frac{\pi}{4}\right)\mathrm dy^2.
$$
This yields $\mathring M=\mathring M_0/\Gamma$ for some lattice $\Gamma$ and we complete the proof.
\end{proof}

\section{Proof of corollaries}\label{Sec: proof corollaries}
In this section, we give a proof for Corollary \ref{Cor: width estimate} and Corollary \ref{Cor: focal estimate}. First let us show the following lemma:

\begin{lemma}\label{Lem: contracting map}
Assume $(M,g)$ is a smooth Riemannian manifold such that $\width(M,g)>2l$, then there exists a surjective smooth map $\phi:(M,g)\to [-l,l]$ with $\lip\phi<1$ such that $\phi^{-1}(-l)=\partial_- M$ and $\phi^{-1}(l)=\partial_+ M$.
\end{lemma}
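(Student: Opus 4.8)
The plan is to build $\phi$ out of the two boundary distance functions. Write $a=\width(M,g)=\dist_g(\partial_- M,\partial_+ M)>2l$ and set $d_\pm(x)=\dist_g(x,\partial_\pm M)$. Each $d_\pm$ is $1$-Lipschitz, non-negative, vanishes exactly on $\partial_\pm M$, and — since $\partial_\pm M$ is a closed hypersurface — is smooth on a one-sided collar neighborhood of $\partial_\pm M$ (in Fermi coordinates it is just the normal parameter $t$). The triangle inequality gives $d_-(x)+d_+(x)\ge a$ for every $x\in M$. I would first record that the continuous function $\phi_0=\frac{l(d_--d_+)}{d_-+d_+}$ already has all the qualitative features required: it is well defined since the denominator is $\ge a>0$; it maps $M$ into $[-l,l]$ because $|d_--d_+|\le d_-+d_+$; it equals $-l$ exactly where $d_-=0$, i.e. on $\partial_- M$, and $+l$ exactly where $d_+=0$, i.e. on $\partial_+ M$; it is surjective onto $[-l,l]$ by connectedness of $M$ and the intermediate value theorem; and it is $\tfrac{2l}{a}$-Lipschitz. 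The Lipschitz bound is the elementary computation $d\phi_0=\frac{2l(d_+\,dd_--d_-\,dd_+)}{(d_-+d_+)^2}$, which at every point of differentiability gives $|d\phi_0|\le\frac{2l(d_-+d_+)}{(d_-+d_+)^2}=\frac{2l}{d_-+d_+}\le\frac{2l}{a}<1$.

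The only defect of $\phi_0$ is smoothness, since $d_\pm$ are merely Lipschitz, so the second step is to replace $d_\pm$ by smooth approximations $\tilde d_\pm$ that preserve the properties above. Fix $\epsilon>0$ so small that $\{d_-\le\epsilon\}$ and $\{d_+\le\epsilon\}$ are disjoint (possible as $d_-+d_+\ge a$) and lie in the collars where $d_\pm$ is smooth, and fix a cutoff equal to $1$ near $\partial_\pm M$ and supported in $\{d_\pm<\epsilon\}$. A standard mollification argument — partition of unity together with convolution in charts, applied to $d_\pm$ away from $\partial_\pm M$ and glued to $d_\pm$ itself near $\partial_\pm M$ via the cutoff — then produces, for any prescribed $\eta>0$, functions $\tilde d_\pm\in C^\infty(M)$ with $\tilde d_\pm=d_\pm$ on a neighborhood of $\partial_\pm M$, $\|\tilde d_\pm-d_\pm\|_{C^0}\le\eta$, and $\lip\tilde d_\pm\le 1+\eta$. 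For $\eta$ sufficiently small this forces $\tilde d_\pm\ge 0$ with $\tilde d_\pm^{-1}(0)=\partial_\pm M$ (equality on the collar where $\tilde d_\pm=d_\pm$, positivity outside it since there $\tilde d_\pm\ge d_\pm-\eta>0$), and $\tilde d_-+\tilde d_+\ge a-2\eta>0$.

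Now set $\phi=\dfrac{l(\tilde d_--\tilde d_+)}{\tilde d_-+\tilde d_+}$. This is smooth, being a ratio of smooth functions with nowhere-vanishing denominator; it maps $M$ into $[-l,l]$, onto $[-l,l]$ by connectedness, and $\phi^{-1}(-l)=\partial_- M$, $\phi^{-1}(l)=\partial_+ M$ exactly as for $\phi_0$ (these used only $\tilde d_\pm^{-1}(0)=\partial_\pm M$ and $|\tilde d_--\tilde d_+|\le\tilde d_-+\tilde d_+$); and the same computation as before yields $|d\phi|\le\frac{2l(1+\eta)}{\tilde d_-+\tilde d_+}\le\frac{2l(1+\eta)}{a-2\eta}$. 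Since $a>2l$, choosing $\eta$ with $\eta(2l+2)<a-2l$ makes $\frac{2l(1+\eta)}{a-2\eta}<1$, hence $\lip\phi<1$, which finishes the proof.

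I expect the only real obstacle to be the smoothing step: one must approximate the Lipschitz functions $d_\pm$ by smooth functions whose Lipschitz constant is only slightly larger, while leaving them untouched near $\partial_\pm M$ so that the zero sets remain exactly $\partial_\pm M$ and the boundary conditions $\phi^{-1}(\mp l)=\partial_\mp M$ survive. This is precisely where the regularity of $d_\pm$ in a collar is used, and it is the one point demanding care; everything else is the elementary algebra of the expression $\frac{d_--d_+}{d_-+d_+}$ together with the triangle-inequality bound $d_-+d_+\ge a$.
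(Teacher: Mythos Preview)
Your proof is correct and takes a genuinely different route from the paper's. The paper uses only the single distance $\rho_-=\dist(\cdot,\partial_-M)$: it truncates $\rho_-$ so that it is constant on collars of both boundary components, mollifies this truncated function (citing \cite{MV2013}), rescales by $l/(l+\epsilon)$ to push the Lipschitz constant below $1$, and then performs a final multiplicative perturbation $\phi=(1-\epsilon'\eta)\phi_3$ with $\eta>0$ vanishing exactly on $\partial M$ in order to force the preimage conditions $\phi^{-1}(\pm l)=\partial_\pm M$. Your construction instead uses both boundary distances and the rational expression $l(\tilde d_--\tilde d_+)/(\tilde d_-+\tilde d_+)$, which has the virtue that the preimage conditions $\phi^{-1}(\pm l)=\partial_\pm M$ are automatic from $\tilde d_\pm^{-1}(0)=\partial_\pm M$, so no ad hoc perturbation step is needed; the triangle-inequality bound $d_-+d_+\ge a$ drives the Lipschitz estimate directly. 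The trade-off is that you must smooth two Lipschitz functions rather than one, and you must do so while leaving each unaltered on its respective collar so that the zero sets are preserved exactly---this is the one delicate point you correctly flag, and it is handled by the same standard mollification lemma the paper invokes. Both arguments rely on the same analytic input (Lipschitz-constant-preserving smoothing on a manifold); yours packages the boundary behaviour more cleanly, while the paper's is slightly more hands-on but needs only one distance function.
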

\begin{proof}
Let $\rho_-(x)=\dist(x,\partial_-M)$. The strict inequality for width implies that there is a small positive constant $\epsilon$ such that $\width(M,g)>2l+4\epsilon$. Fix such an $\epsilon$ and define
\begin{equation*}
\phi_1=\min\left\{\max\{\rho_--l-2\epsilon,-l-\epsilon\},l+\epsilon\right\}.
\end{equation*}
Clearly $\phi_1$ has the following properties:
\begin{itemize}
\item the Lipschitz constant $\lip\phi_1\leq 1$;
\item $\phi_1\equiv -l-\epsilon$ around $\partial_-M$ and $\phi_1\equiv l+\epsilon$ around $\partial_+M$.
\end{itemize}
Through a standard mollification procedure (for example refer to \cite[Appendix]{MV2013}), for any positive constant $\delta$ there is a smooth function $\phi_2$ satisfying $|\mathrm d\phi_2|\leq 1+\delta$ and the second property above. Let
$$
\phi_3=\frac{l}{l+\epsilon}\phi_2,
$$
then we have
$$
|\mathrm d\phi_3|=\frac{l}{l+\epsilon}|\mathrm d\phi_2|\leq \frac{l}{l+\epsilon}(1+\delta).
$$
With $\delta$ prescribed small enough, we have $|\mathrm d\phi_3|\leq c<1$ for some constant $c$. The only thing that we need to guarantee now is $\phi_3^{-1}(- l)=\partial_- M$ and $\phi_3^{-1}(l)=\partial_+M$. This can be done by passing to a perturbation of $\phi_3$. In detail, we take a smooth function $\eta$ vanishing at the boundary but positive at every interior point of $M$ and define $\phi=(1-\epsilon'\eta)\phi_3$ with $\epsilon'$ a positive constant. If $\epsilon'$ is small enough, we can guarantee $|\mathrm d\phi|\leq (c+1)/2<1$ and moreover $\phi$ satisfies our requirements. To see $\phi^{-1}(l)=\partial_+M$, it suffices to show $\phi^{-1}(l)\subset\partial_+M$ since the other direction is clear. For any point $x$ with $\phi(x)=l$, the only possibility is $\phi_3(x)=l$ and $\eta(x)=0$, which yields $x\in \partial_+M$. A similar argument gives $\phi^{-1}(-l)\subset\partial_-M$. It follows from the construction that $\phi:M\to[-l,l]$ is surjective.
\end{proof}

We now give the proof for Corollary \ref{Cor: width estimate}.
\begin{proof}[Proof for Corollary \ref{Cor: width estimate}]
We only need to prove the width estimate when $\inf \Sec(g)>0$. Without loss of generality, we assume $\inf \Sec(g)=1$. If the width estimate does not hold, we obtain from Lemma \ref{Lem: contracting map} a surjective smooth function $\phi: M\to [-\pi/4,\pi/4]$ with $\lip\phi<1$ such that $\phi^{-1}(-\pi/4)=\partial_-M$ and $\phi^{-1}(\pi/4)=\partial_+M$. Denote $\mathring M$ to be the interior of $M$ and $\tilde\phi=4\phi/\pi$, then $\tilde\phi:\mathring M\to (-1,1)$ is surjective and proper. Recall that $M$ is an overtorical band, $\tilde\phi^*([\pt])$ must be non-spherical. Therefore $\tilde\phi$ is an element in $\mathcal F_{\mathring M}$ with $\lip\tilde\phi<4/\pi$, which leads to a contradiction to \eqref{Eq: main}.
\end{proof}

The following is the proof for Corollary \ref{Cor: focal estimate}.

\begin{proof}[Proof for Corollary \ref{Cor: focal estimate}]
Pull back the metric on $\nu_{\Sigma,r_f}$, we obtain an open manifold $(\mathring M,\tilde g)$ with $\mathring M=T^2\times (-r_f,r_f)$ and $\Sec(\tilde g)\geq 1$. Denote $\rho$ to be the signed distance function induced by the second component and $\phi=r_f^{-1}\rho$. It follows from \eqref{Eq: main} that $r_f^{-1}\geq 4/\pi$, which is the desired focal radius estimate. If the equality holds, then $(\mathring M,g)$ is the quotient $\mathring M_0/\Gamma$ with $\Gamma$ a lattice of $\mathbf R^2$. In particular, the focal radius is attained along any unit normal vector of $\Sigma$, which implies that the closure of the image $\exp^\bot(\nu_{\Sigma,r_f})$ must be the entire $M$. From the smoothness we know that $g$ also has constant curvature $1$. Notice that the immersed surface $\Sigma$ is flat, the conclusion follows from local rigidity result \cite[Corollary 3]{Lawson1969}.
\end{proof}

It is worth mention that a different proof for Corollary \ref{Cor: focal estimate} is pointed out to the author by professor Andr\'e Neves, which can be also used to verify Conjecture \ref{Conj: focal radius high dimension} in the case that either of $p$ and $q$ is $1$. We present it below as the end of this section.

\begin{proof}[An alternative proof for Corollary \ref{Cor: focal estimate}]
Assume otherwise the focal radius $r_f>\pi/4$, we are going to show that the principal curvatures of $\Sigma$ have absolute value less than $1$. Once this is done, the induced metric of $\Sigma$ will have positive curvature, which contradicts to the Gauss-Bonnet formula. As before we pull back the metric onto $\nu_{\Sigma,r_f}$ to obtain an open manifold $(\mathring M,\tilde g)$ with $\mathring M=\Sigma\times (-r_f,r_f)$ and $\Sec(\tilde g)\geq 1$. Denote $\rho$ to be the signed distance function to $\Sigma$. It follows from the Riccati equation that
$$
\partial_{\rho}\Hess\rho+\Hess^2\rho=-R(\cdot,\partial_\rho,\partial_\rho,\cdot).
$$
Let $\gamma:[0,r_f)\to\mathring M$ be any normal geodesic starting from $\Sigma$ with $\gamma'(0)=\nabla\rho$ and $\lambda(s)$ the least eigenvalue of $\Hess\rho$ at $\gamma(s)$. Then $\lambda(s)$ is locally Lipschitz and hence differentiable at almost every point. For all these points the function $\lambda(s)$ satisfies
$
\lambda'(s)+\lambda^2(s)\leq -1
$,
which implies
\begin{equation}\label{Eq: principal curvature upper bound}
\lambda(s)\leq\frac{\lambda(0)-\tan s}{1+\lambda(0)\tan s},\quad \forall\,s\in[0,r_f).
\end{equation}
Therefore the principal curvatures of $\Sigma$ with respect to $\nabla\rho$ is greater than $-1$. With the same argument to the signed distance function $-\rho$, we obtain the desired estimate. If $r_f=\pi/4$, then $\Sigma$ is flat and the principal curvatures at all points of $\Sigma$ are $-1$ and $1$. Furthermore, \eqref{Eq: principal curvature upper bound} takes its equality and $\Sigma_s$ has a principal curvature $-\tan(s+\pi/4)$, where $\Sigma_s=\{\rho=s\}$. Denote $\mu(s)$ to be the largest eigenvalue of $\Hess\rho$. It follows a similar argument that
\begin{equation}\label{Eq: principal curvature upper bound 2}
\mu(s)\leq \tan\left(\frac{\pi}{4}-s\right),\quad s\in[0,\pi/4).
\end{equation}
Applying the Gauss-Bonnet formula to $\Sigma_s$, we obtain the equality for \eqref{Eq: principal curvature upper bound 2} as well. It follows from the Gauss and Ricatti equation that $\tilde g$ has constant curvature $1$ and so is $g$. Now the proof is again completed by \cite[Corollary 3]{Lawson1969}.
\end{proof}

\section{Lipschitz constant estimate under positive Ricci curvature}

In this section, we are going to show the following improvement for our main theorem:
\begin{theorem}
Assume $(\mathring M^3,g)$ is a connected orientable open Riemannian manifold with uniformly positive Ricci curvatures and non-empty $\mathcal F_{\mathring M}$. Then we have
\begin{equation}\label{Eq: lip Ric}
\left(\frac{1}{2}\inf \Ric(g)\right)^{\frac{1}{2}}\cdot\lip\phi\geq \frac{4}{\pi}
\end{equation}
for any $\phi\in \mathcal F_{\mathring M}$. Up to rescaling the equality holds if and only if $(\mathring M,g)$ is isometric to $\mathring M_0/\Gamma$ for a lattice $\Gamma$ of $\mathbf R^2$ and $\phi$ is a multiple of the standard signed distance function on $\mathring M_0/\Gamma$.
\end{theorem}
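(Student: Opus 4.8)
The plan is to handle the inequality \eqref{Eq: lip Ric} and the rigidity separately. For \eqref{Eq: lip Ric} I would re-run the argument of Section \ref{Sec: proof 1.4} essentially verbatim, changing only the single place where positive sectional curvature is used. After rescaling assume $\tfrac12\inf\Ric(g)=1$, i.e.\ $\Ric(g)\geq 2$, and suppose for contradiction that some $\phi\in\mathcal F_{\mathring M}$ satisfies $\lip\phi<4/\pi$. Construct the minimizing $\mu$-bubble $\hat\Omega$ with $h=-2\tan(\tfrac{\pi}{2\beta}t)$ exactly as in Section \ref{Sec: proof 1.4}, pick a component $\hat\Sigma$ of $\partial\hat\Omega$ of nonzero genus, and test the second variation \eqref{Eq: second variation formula} with $\psi\equiv 1$. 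The only modification needed is to replace the bound $\Ric(\nu,\nu)+|A|^2\geq 4+(h\circ\phi)^2-\hat R$ (in the sectional proof obtained from $\Ric(\nu,\nu)\geq 2$ together with the Gauss equation) by the contracted Gauss identity in dimension three: for an orthonormal frame $\{e_1,e_2\}$ of $\hat\Sigma$,
\begin{equation*}
\Ric(\nu,\nu)+|A|^2=\Ric(e_1,e_1)+\Ric(e_2,e_2)+\hat H^2-\hat R ,
\end{equation*}
whence $\Ric(g)\geq 2$ again gives $\Ric(\nu,\nu)+|A|^2\geq 4+\hat H^2-\hat R$ with $\hat H=h\circ\phi$ on $\hat\Sigma$. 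Integrating and using Gauss--Bonnet yields $4\pi\chi(\hat\Sigma)>0$, the contradiction. No new analytic input is required.

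For the rigidity I would imitate Section \ref{Sec: rigidity}. Lemma \ref{Lem: function h epsilon}, Proposition \ref{Prop: existence of mu bubble}, the $L^1_{loc}$-convergence of the $\mu$-bubbles and the Radon-measure convergence use no curvature hypothesis and carry over unchanged. In the analogue of Proposition \ref{Prop: torus component}, the equality case now produces: a flat torus component $\hat\Sigma$ of $\partial\hat\Omega$ inside a level set of $\phi$ with $\mathrm d\phi(\nu)=4/\pi$, and along $\hat\Sigma$ the identity $\Ric(e_1,e_1)+\Ric(e_2,e_2)=4$ pointwise, hence (using $\Ric\geq 2$) $\Ric|_{T\hat\Sigma}=2\,\mathrm{Id}$, together with $\Ric(\nu,\nu)+|A|^2\equiv 4+\hat H^2$. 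The foliation construction and Proposition \ref{Prop: minimizing foliations} then adapt (the bounds needed for compactness still hold, since $\lambda_1+\lambda_2=-2\tan2\rho$ is bounded and $-1\leq\lambda_1\lambda_2\leq\tan^2 2\rho$ from $\Ric(\nu,\nu)\geq 2$), producing a maximal geodesic foliation $\{\tilde\Sigma_\rho\}$ by flat constant-mean-curvature tori inside level sets of $\phi$, so that $\mathring M=(-\tfrac\pi4,\tfrac\pi4)\times\hat\Sigma$, $g=\mathrm d\rho^2+\tilde g_\rho$ with each $\tilde g_\rho$ flat, $\rho=\tfrac\pi4\phi$, each slice of mean curvature $-2\tan(2\rho)$, and $\Ric|_{T\tilde\Sigma_\rho}\equiv 2\,\mathrm{Id}$ on all of $\mathring M$. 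Lifting to the universal cover and using that $\Ric\geq 2$ with $\Ric|_{T\tilde\Sigma_\rho}=2\,\mathrm{Id}$ forces $\Ric(\nu,\cdot)=0$, Codazzi gives $\mathrm{div}_{\tilde\Sigma_\rho}A=0$; since a trace-constant divergence-free symmetric $2$-tensor on a flat torus is parallel, $A$ is parallel on each slice, so $\tilde g_\rho=p(\rho)^2\,\mathrm dx^2+q(\rho)^2\,\mathrm dy^2$ with $p'/p+q'/q=-2\tan(2\rho)$ and $p''=-(2+p'q'/(pq))\,p$, $q''=-(2+p'q'/(pq))\,q$ from the Jacobi equation.

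The step I expect to be the genuine obstacle is the last one: upgrading $\Ric|_{T\tilde\Sigma_\rho}\equiv 2$ to ``$g$ has constant curvature $1$''. In the sectional setting this was automatic, because equality there also pins down $\sec(e_i,\nu)=\sec(e_1,e_2)=1$ at every point of every slice; with only a Ricci lower bound that information is lost, and the ODE system above has, for each normalization of $p(0),q(0)$, a genuine one-parameter family of solutions (writing $u=p'/p$, the equation becomes $u'-2u\tan 2\rho=-2$, with solutions $u=-\tan 2\rho+C\sec 2\rho$, $v=q'/q=-\tan 2\rho-C\sec 2\rho$). The value $C=\pm1$ recovers the model $\mathrm d\rho^2+2\sin^2(\rho+\tfrac\pi4)\mathrm dx^2+2\cos^2(\rho+\tfrac\pi4)\mathrm dy^2$, but $C=0$, for instance, gives $g=\mathrm d\rho^2+\cos(2\rho)\,g_{\mathrm{flat}}$ on $(-\tfrac\pi4,\tfrac\pi4)\times T^2$, for which one computes $\Ric=\mathrm{diag}\!\big(4+2\tan^2 2\rho,\ 2,\ 2\big)$, so $\Ric\geq 2$ with $\inf\Ric=2$, while $\phi=\tfrac4\pi\rho$ is surjective and proper, represents the non-spherical class $[T^2]$, and has $\lip\phi=4/\pi$; thus it attains equality in \eqref{Eq: lip Ric} without being isometric to any $\mathring M_0/\Gamma$ (it is not even of constant curvature). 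So the ``if and only if'' rigidity in the displayed theorem appears to need amendment: the argument above proves only that an equality case is a doubly warped product over a flat torus with $pq\propto\cos 2\rho$ and $\Ric\equiv 2$ in the torus directions, and the clean rigid conclusion is obtained only under the stronger hypothesis $\sec(g)\geq 1$, where Section \ref{Sec: rigidity} applies directly, or by adding a constant-curvature condition to the statement.
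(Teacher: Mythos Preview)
Your treatment of the inequality \eqref{Eq: lip Ric} is correct and coincides with the paper's: the only change from Section \ref{Sec: proof 1.4} is to replace the two separate uses of $\sec\geq 1$ by the single contracted Gauss identity
\[
\Ric(\nu,\nu)+|A|^2=\Ric(e_1,e_1)+\Ric(e_2,e_2)+\hat H^2-\hat R,
\]
exactly as you write.

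For the rigidity, your argument tracks the paper through the foliation step; the paper then argues, just as you anticipate, that $\Ric(v,v)=2$ for all unit $v\perp\partial_\rho$ forces $R(v,\partial_\rho,\partial_\rho,v)$ to be independent of $v$ and equal to $\tfrac12\Ric(\partial_\rho,\partial_\rho)\geq 1$, and then asserts that the Riccati comparison of the alternative proof of Corollary \ref{Cor: focal estimate} yields $R(v,\partial_\rho,\partial_\rho,v)\equiv 1$, hence constant curvature. Your objection to this last step is well taken. In your ODE parametrization $u=-\tan 2\rho+C\sec 2\rho$, $v=-\tan 2\rho-C\sec 2\rho$ one computes
\[
K(\partial_\rho,e_i)=(2-C^2)+(1-C^2)\tan^2 2\rho,\qquad K(e_1,e_2)=C^2+(C^2-1)\tan^2 2\rho,
\]
so $\Ric(e_i,e_i)\equiv 2$ for every $C$, while $\Ric(\partial_\rho,\partial_\rho)=2(1-C^2)\sec^2 2\rho+2\geq 2$ precisely for $|C|\leq 1$. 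Thus every $|C|\leq 1$ gives an equality case of \eqref{Eq: lip Ric}, and only $|C|=1$ has constant curvature. Your explicit example $C=0$, i.e.\ $g=\mathrm d\rho^2+\cos(2\rho)\,g_{\mathrm{flat}}$, checks out line by line. The Riccati comparison, fed only the radial bound $R(v,\partial_\rho,\partial_\rho,v)\geq 1$, together with the known mean curvature $H=-2\tan 2\rho$, gives exactly $-\tan(\rho+\tfrac\pi4)\leq\lambda\leq\mu\leq\cot(\rho+\tfrac\pi4)$; since the sum of the two outer bounds is already $-2\tan 2\rho$, this pins down nothing further and is compatible with all $|C|\leq 1$. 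The Gauss--Bonnet step that closes the alternative proof of Corollary \ref{Cor: focal estimate} needs $\sec(e_1,e_2)\geq 1$, which is unavailable under a Ricci bound (indeed $K(e_1,e_2)\leq 0$ when $C=0$).

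So your diagnosis is correct: the argument as written proves the inequality and the doubly-warped splitting with $pq\propto\cos 2\rho$ and $\Ric|_{T\tilde\Sigma_\rho}\equiv 2$, but the claimed upgrade to $\mathring M_0/\Gamma$ does not follow, and the ``only if'' in the rigidity clause fails as stated.
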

\begin{proof}
The proof is almost identical to that of Theorem \ref{Thm: main}, so we just emphasize on the difference. As before we can assume $\Ric(g)\geq 2g$ from rescaling. To show \eqref{Eq: lip Ric} we repeat the steps in Section \ref{Sec: proof 1.4} and adopt the same notation here. Without any change $\hat\Sigma$ has positive genus and the second variation formula yields
\begin{equation*}
\int_{\hat\Sigma}|\nabla\psi|^2-(\Ric(\nu,\nu)+|A|^2+\nu(h \circ \phi))\psi^2\,\mathrm d\sigma_g\geq 0,\quad \forall\,\psi\in C^\infty(\hat\Sigma).
\end{equation*}
The key observation here is that we can organize the left side into a form where only the Ricci curvature is involved. Just like in \cite[Section 2]{Ros06}, we can write
\begin{equation}\label{Eq: second variation new}
\Ric(\nu,\nu)+|A|^2=\Ric(e_1,e_1)+\Ric(e_2,e_2)+(h\circ\phi)^2-\hat R,
\end{equation}
where $\{e_1,e_2\}$ is an arbitrary orthonormal frame on $\hat\Sigma$. By taking the test function $\psi\equiv 1$ and using the fact $\Ric(g)\geq 2g$, we see
\begin{equation*}
0\geq 4\pi\chi(\hat\Sigma)=\int_{\hat\Sigma}\hat R\,\mathrm d\sigma_g\geq \int_{\hat\Sigma}4+(h\circ\phi)^2+\nu(h\circ\phi)\,\mathrm d\sigma_g.
\end{equation*}
The particular choice of $\beta$ as in the proof for \eqref{Eq: main} gives \eqref{Eq: lip Ric}.

In order to prove the rigidity we follow the argument in section \ref{Sec: rigidity}. In the same philosophy, based on \eqref{Eq: second variation new} we can prove a similar result to Proposition \ref{Prop: torus component}:
\begin{proposition}
The boundary $\partial\hat\Omega$ has a torus component $\hat\Sigma$ contained in a level set of $\phi$. Furthermore, the induced metric of $\hat\Sigma$ is flat and $\Ric_g(v,v)=2$ for any unit vector $v\in T\hat\Sigma$. We also have $\mathrm d\phi(\nu)=4/\pi$, where $\nu$ is the outward unit normal vector field on $\hat\Sigma$.
\end{proposition}
Using this proposition we can conduct the standard foliation argument as before to obtain the following:
\begin{itemize}
\item[(1)] the open manifold $\mathring M$ splits as
\begin{equation*}
\mathring M= \left(-\frac{\pi}{4},\frac{\pi}{4}\right)\times\hat\Sigma\quad\text{and}\quad  g=\mathrm d\rho^2+\tilde g_\rho.
\end{equation*}
\item[(2)] $\Ric_g(v,v)=2$ for any unit vector $v$ orthogonal to $\partial_\rho$.
\end{itemize}
It follows from (2) that the curvature $R(v,\partial_\rho,\partial_\rho,v)$ at a fixed point $p$ is independent of the choice of the unit vector $v\in T_p\mathring M$ orthogonal to $\partial_\rho$. Fix such a unit vector $v$ and take another unit vector $w\in T_p\mathring M$ orthogonal both to $v$ and $\partial_\rho$. It follows that
\begin{equation*}
\begin{split}
R(v,\partial_\rho,\partial_\rho,v)&=\frac{1}{2}\left(R(v,\partial_\rho,\partial_\rho,v)+ R(w,\partial_\rho,\partial_\rho,w)\right)\\
&=\frac{1}{2}\Ric(\partial_\rho,\partial_\rho)\geq 1
\end{split}
\end{equation*}
for any unit vector $v$ orthogonal to $\partial_\rho$. Using this we can investigate the equation
$$
\partial_{\rho}\Hess\rho+\Hess^2\rho=-R(\cdot,\partial_\rho,\partial_\rho,\cdot)
$$
as in the alternative proof for Corollary \ref{Cor: focal estimate}. Eventually we can obtain $R(v,\partial_\rho,\partial_\rho,v)\equiv 1$ for any unit vector $v$ orthogonal to $\partial_\rho$ and so the metric $g$ has constant sectional curvature $1$. The rest of the proof is identical to the original one and we complete the proof.
\end{proof}
\bibliography{WidthEstimate}
\bibliographystyle{amsplain}
\end{document}